\newtheorem{thm}{Theorem}
\newtheorem{exm}{Example}
\newtheorem{lem}{Lemma}
\newtheorem{rem}{Remark}
\newtheorem{pro}{Proposition}
\newtheorem{defn}{Definition}
\newtheorem{prob}{Problem}
\newcommand{\diag}{\mathrm{Diag}}
\newcommand{\tr}{\mathrm{tr}}
\begin{document}

\title{\LARGE \bf
	Reduced Order Modeling of Diffusively Coupled Network Systems: \\An Optimal Edge Weighting Approach
}

\author{
	Xiaodong~Cheng, 
	Lanlin Yu, Dingchao Ren
	and Jacquelien M.A. Scherpen  
	\thanks{This work of X. Cheng is supported by 
		the European Research Council (ERC), Advanced Research Grant SYSDYNET, under the European Unions Horizon 2020 research and innovation programme (Grant Agreement No. 694504). This work of L. Yu is supported by the National Natural Science Foundation of China Under Project 61761136005.}%
	
	\thanks{Xiaodong Cheng is with Control Systems Group, Department of Electrical Engineering,
		Eindhoven University of Technology, 
		5600 MB Eindhoven,
		The Netherlands
		{\tt\small x.cheng@tue.nl}}
	\thanks{Lanlin Yu is with the Institute of Advanced Technology, Westlake Institute for Advanced Study, Westlake University, Hangzhou 310024, China and Dingchao Ren is with the Department of Automation, University of Science and Technology of China, Hefei 230026, China.
	{\tt\small \{yulanlin1992, dcrenustc\}@gmail.com}}
	\thanks{Jacquelien M.A. Scherpen is with Jan C. Willems Center for Systems and Control, Engineering and Technology Institute Groningen, Faculty of Science and Engineering, University of Groningen, Nijenborgh 4, 9747 AG Groningen, the Netherlands.
		{\tt\small j.m.a.scherpen@rug.nl}}

}

\maketitle

\begin{abstract}
This paper studies reduced-order modeling of dynamic networks with strongly connected topology. Given a graph clustering of an original complex network, we construct a quotient graph with less number of vertices, where the edge weights are parameters to be determined. The model of the reduced network is thereby obtained with parameterized system matrices, and then an edge weighting procedure is devised, aiming to select an optimal set of edge weights that minimizes the approximation error between the original and the reduced-order network models in terms of $\mathcal{H}_2$-norm. The effectiveness of the proposed method is illustrated by a numerical example.
\end{abstract}

\section{Introduction} 

With the growing complexity of spatially interconnected dynamic systems, the importance of understanding and managing dynamic networks has been widely recognized. An important class of dynamic networks is given by the so-called diffusively coupled networks, which are commonly used for describing diffusion processes, e.g., information or energy spreading in networks. The examples can be found in vehicle formations, electrical networks, synchronization in sensor networks, and opinion dynamics in social networks, see e.g. \cite{fax2001graph,dorfler2018electrical,Scutari2008SensorNetworks,proskurnikov2015opinion}. The spatial structures of such systems are usually complex and result in high-dimensional models that cause challenges for analysis, control, and optimization. To effectively capture the collective behaviors of dynamics over networks, it is desirable to simplify the structure of a complex network without a significant loss of accuracy. 

Different from model reduction problems for other types of dynamic systems, the one considered in this paper puts emphasis on the preservation of the network structure, which 
is necessary for applications e.g., distributed controller design and sensor allocation \cite{summers2014optimal,gates2016control,ishizaki2018graph}. Conventional model reduction methods, e.g., balanced truncation and moment matching, merely focus on approximating the input-output behavior of a given dynamic system \cite{antoulas2005approximation}, while the preservation of the network structure is barely guaranteed. Although a generalized balanced truncation approach in \cite{XiaodongBT2017} is able to construct an accurate reduced-order model with a network interpretation, the relation between the original and obtained new typologies is not yet clear. 
Singular perturbation approximation, as alternative network structure-preserving approach, has been applied to the reduction of electric circuits \cite{Biyik2008Areaaggregation} and chemical reaction networks \cite{Rao2013graph}. This class of methods mainly relies on time-scale separation of the states in an autonomous network system, while the external inputs are not considered explicitly. Besides, the resulting reduced topology hardly retains sparsity. 

Recently, clustering-based methods have been intensively studied and become the mainstream methodology for reducing network systems, see e.g., \cite{Monshizadeh2014,Besselink,Ishizaki2014,ishizaki2015clustereddirected,jongsma2018model,XiaodongTAC2018MAS,XiaodongCDC2017Digraph,Xiaodong2020Digraph,Martin2018scalefree}. With graph clustering, the vertices in a large-scale network are partitioned into several disjoint clusters. This class of methods has a clear advantage in retaining the consensus property \cite{Monshizadeh2014,Xiaodong2020Digraph}, system positivity  \cite{ishizaki2015clustereddirected}, and scale-free property \cite{Martin2018scalefree} in reduced-order models. The model reduction procedure can be implemented via the Petrov-Galerkin projection framework, where the projection matrix is formed based on the vertex clusters. However, all the current clustering-based methods put their main focus on finding suitable clusters. After clusters are found, reduced-order network models are then directly determined by the projection framework, while the freedom to construct a reduced-order network model with higher accuracy is overlooked.

In this paper, we will explore the latter freedom and provide a novel method for reduced-order modeling of directed networks. We do not aim to find an optimal clustering. Instead, we assume that the clustering of a network is given, which leads to a \textit{quotient graph}. A parameterized reduced-order model is established based on this quotient graph, in which the edge weights are free variables to be optimized. Then, the major problem in this paper follows: How to tune the edge weights in the parameterized reduced-order  model to minimize the approximation error?

This problem can be formulated as an optimization problem with the objective to minimize the $\mathcal{H}_{2}$-norm of the reduction error between original and reduced network systems, in which the edge weights of the reduced network are variables to be optimized. This edge weighting problem is subject to a bilinear matrix inequality (BMI) constraint, which is computationally expensive. Therefore, we devise a novel edge weighting algorithm based on the \textit{convex-concave decomposition}, which linearizes the nonconvex constraint as a convex one in the form of a linear matrix
inequality (LMI). An iterative scheme is implemented to search for a set of optimal weights. The convergence of this algorithm is theoretically ensured, and thus at least a local optimum can be reached. Moreover, we initialize the edge weights as the outcome of clustering-based projection, such that the obtained reduced-order network model is guaranteed a better approximation accuracy than the clustering-based projection methods. 
 
The rest of this paper is organized as follows. In Section \ref{sec:Preliminaries}, we recap some preliminaries on graph theory and introduce the problem setup. In Section~\ref{sec:Main}, the parameterized reduced-order model is formulated, and an edge weighting algorithm is proposed to minimize the approximation error. In Section~\ref{sec:example}, the proposed method is illustrated by an example, and Section~\ref{sec:conclusion} finally makes some concluding remarks.

\textit{Notation:} The symbol $\mathbb{R}$ and $\mathbb{R}_+$ denote the set of real numbers and positive real numbers, respectively. Let $\mathcal{S}^n$ be the set of real symmetric matrices of size $n \times n$.  $I_n$ is the identity matrix of size $n$, and $\mathds{1}_n$ represents the vector in $\mathbb{R}^n$ of all ones. The cardinality of a set $\mathcal{S}$ is denoted by $|\mathcal{S}|$. For a real matrix $A$, the columns of $A^{\perp}$ form a basis of the null space of
$A$, that is, $AA^{\perp}=0$. 
\section{Preliminaries and Problem Setting} 
\label{sec:Preliminaries}
This section provides necessary definitions and concepts in graph theory used in this paper, and we refer to \cite{mesbahi2010graph} for more details. The model of a dynamical network is then introduced and the model reduction problem is formulated. 

\subsection{Graph Theory}
\label{sec:graphclustering}
A directed graph $\mathcal{G}: = (\mathcal{V}, \mathcal{E})$ consists of a finite and nonempty node set  $\mathcal{V}: = \{1, 2, \cdots, n\}$ and an edge set $\mathcal{E} \subseteq \mathcal{V} \times \mathcal{V}$. Each element  in $\mathcal{E}$ is an ordered pair of $\mathcal{V}$, and if $(i,j) \in \mathcal{E}$, we say that the edge is directed from vertex $i$ to vertex $j$. A directed graph $\mathcal{G}$ is called \textit{simple}, if $\mathcal{G}$ does not contain self-loops (i.e., $\mathcal{E}$ does not contain any edge of the form $(i,i)$, $\forall~i\in \mathcal{V}$), and there exists only one edge directed from $i$ to $j$, if $(i,j) \in \mathcal{E}$. 

Next, we introduce several important matrices for characterizing a directed simple graph. Let $m: = |\mathcal{E}|$, the \textit{incidence matrix} $\mathcal{B} \in \mathbb{R}^{n \times m}$  is defined by
\begin{equation*}
	\mathcal{B}_{ij} = \begin{cases} 
	+1 & \text{if edge}~j~\text{is directed from vertex}~i, \\ 
	-1 & \text{if edge}~j~\text{is directed to vertex}~i, \\ 
	0 & \text{otherwise}. \end{cases} 
\end{equation*}
If each edge is assigned a positive value (weight), then the \textit{weighted adjacency matrix} of $\mathcal{G}$, denoted by $\mathcal{A}$, is defined such that $\mathcal A_{ij} \in \mathbb{R}_+$ denotes the weight of edge $(j,i) \in \mathcal{E}$, and $\mathcal A_{ij} = 0$ if $(j,i) \notin \mathcal{E}$. In the case of a simple graph, $\mathcal{A}$ is a binary matrix with zeros on its diagonal.
Then, the \textit{Laplacian matrix} ${L} \in \mathbb{R}^{n \times n}$ of the graph $\mathcal{G}$ is defined as
\begin{equation} \label{defn:Laplacian}
{L}_{ij} = \left\{ \begin{array}{ll} 
\sum_{j=1,j\ne i}^{n} \mathcal A_{ij} & \text{if}~i = j,\\
-\mathcal A_{ij}  & \text{otherwise.}
\end{array}
\right.
\end{equation}
Clearly, ${L} \mathds{1} = 0$. The diagonal entries of ${L}$  are strictly positive, and the off-diagonal entries are non-positive.
Alternatively, we can characterize the Laplacian matrix using the incidence matrix of $\mathcal{G}$ as
\begin{equation} \label{eq:LEWB}
	{L} = \mathcal{B}_0 W \mathcal{B}^\top,
\end{equation}
where $\mathcal{B}_0$ is a binary matrix obtained by replacing all ``$-1$" entries in the incidence matrix $\mathcal{B}$ with zeros, and 
\begin{equation*}
	W := \diag(w), \ \text{with}\ w = 
	\begin{bmatrix}
		w_1 & w_2 & \cdots & w_{|\mathcal{E}|}
	\end{bmatrix}^\top,
\end{equation*}
and $w_k$ the positive weight associated to the $k$-th edge, for all $k \in \{1,2,\cdots, |\mathcal{E}|\} $. 

For a vertex in a weighted graph, the \textit{indegree} and \textit{outdegree} of the vertex are computed as $\sum_{j \in \mathcal{V}} \mathcal{A}_{ij}$ and $\sum_{i \in \mathcal{V}} \mathcal{A}_{ij}$, respectively. A strongly connected graph $\mathcal{G}$ is called \textit{balanced} if the indegree and outdegree of each vertex in $\mathcal{G}$ is equal. From \eqref{defn:Laplacian}, the following lemma is immediate.
\begin{lem} \label{lem:balanced}
	A weighted strongly connected graph $\mathcal{G}$ is balanced if and only if one of the following conditions hold. 
	\begin{enumerate}
		\item The edge weights of $\mathcal{G}$ satisfies 
		$
			\mathcal{B} w = 0.
		$
		\item The Laplacian matrix of $\mathcal{G}$ satisfies 
		$
		\ker({L}) = \ker({L}^\top) = \mathrm{span}(\mathds{1}). 
		$
	\end{enumerate}
\end{lem}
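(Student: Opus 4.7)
The plan is to treat the two stated conditions as equivalent reformulations of the statement ``outdegree equals indegree at every vertex,'' using the factorization $L=\mathcal{B}_0 W \mathcal{B}^\top$ from \eqref{eq:LEWB} as the bridge between them.

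First I would tackle condition (1). The $i$-th entry of $\mathcal{B}w$ is
\begin{equation*}
(\mathcal{B}w)_i \;=\; \sum_{k:\ \mathcal{B}_{ik}=+1} w_k \;-\; \sum_{k:\ \mathcal{B}_{ik}=-1} w_k,
\end{equation*}
which by the definition of $\mathcal{B}$ is exactly the sum of weights of edges leaving vertex $i$ minus the sum of weights of edges entering $i$, i.e. outdegree$(i)-$indegree$(i)$. Hence $\mathcal{B}w=0$ holds if and only if $\mathcal{G}$ is balanced; no use of strong connectedness is needed here.

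Next I would link condition (2) to condition (1) through the Laplacian factorization. Since every column of $\mathcal{B}_0$ contains exactly one $+1$ (the source of that edge), $\mathcal{B}_0^\top \mathds{1} = \mathds{1}_m$, and therefore
\begin{equation*}
L^\top \mathds{1} \;=\; \mathcal{B} W \mathcal{B}_0^\top \mathds{1} \;=\; \mathcal{B} W \mathds{1}_m \;=\; \mathcal{B} w .
\end{equation*}
Thus $\mathcal{B}w=0 \Leftrightarrow L^\top\mathds{1}=0 \Leftrightarrow \mathds{1}\in\ker(L^\top)$. For the kernel of $L$ itself, $L\mathds{1}=0$ always holds by \eqref{defn:Laplacian}, so $\mathrm{span}(\mathds{1})\subseteq\ker(L)$ unconditionally.

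To upgrade these inclusions to equalities I would invoke the standard spectral fact that for a strongly connected weighted digraph the Laplacian $L$ has a simple zero eigenvalue, so $\dim\ker(L)=\dim\ker(L^\top)=1$ (a consequence of the Perron--Frobenius theorem applied to the associated nonnegative matrix, and a result that can be cited from \cite{mesbahi2010graph}). Combining this with the preceding paragraph, strong connectedness and $\mathcal{B}w=0$ together give $\ker(L)=\ker(L^\top)=\mathrm{span}(\mathds{1})$; conversely, if $\ker(L^\top)=\mathrm{span}(\mathds{1})$ then in particular $L^\top\mathds{1}=0$, so $\mathcal{B}w=0$ by the identity above. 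Chaining these equivalences closes the loop between (1), (2), and balancedness. The only nontrivial ingredient is the one-dimensional kernel under strong connectedness; everything else is a direct unpacking of \eqref{eq:LEWB} and the incidence matrix definition, so I expect no real obstacle.
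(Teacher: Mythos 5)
Your proof is correct and takes essentially the same route the paper intends: the paper declares the lemma ``immediate'' from \eqref{defn:Laplacian} and, in the sentence that follows it, points to exactly the two ingredients you spell out, namely that $\mathcal{B}w$ (equivalently $L^\top\mathds{1}$, via $L=\mathcal{B}_0 W\mathcal{B}^\top$) records the outdegree--indegree mismatch at each vertex, and that strong connectivity makes the zero eigenvalue of $L$ simple so the kernel inclusions become equalities. There is nothing to add.
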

The strong connectivity implies that there is only one zero eigenvalue of ${L}$ \cite{mesbahi2010graph}, and the balance of $\mathcal{G}$ then indicates that both the row and column sums of ${L}$ are zero.

\begin{rem}
	Undirected graphs can be viewed as special balanced directed graphs with bidirectional edges. The Laplacian matrix of an undirected graph is 
	$
		{L} = \mathcal{B} W \mathcal{B}^\top,
	$
	where $\mathcal{B}$ is an incidence matrix obtained by assigning an arbitrary orientation to each edge of the undirected graph, and $W$ is a positive diagonal matrix representing edge weights. 
\end{rem}

Next, we recapitulate the notion of graph clustering, whose concept can be found in e.g., \cite{Monshizadeh2014,ishizaki2015clustereddirected,Ishizaki2014,Besselink,jongsma2018model,XiaodongTAC2018MAS}.

\begin{defn} 
	Let $\mathcal{G}: = (\mathcal{V}, \mathcal{E})$ be a directed graph. Then, a graph clustering is a partition of $\mathcal{V}$ into $r$ nonempty disjoint subsets $ \mathcal{C}_1,\mathcal{C}_2,\cdots,\mathcal{C}_r$ covering all the
	elements in $\mathcal{V}$, where $\mathcal{C}_i$ is called a cluster of $\mathcal{G}$.
\end{defn}
Let $\{ \mathcal{C}_1,\mathcal{C}_2,\cdots,\mathcal{C}_r \}$ be a clustering of $\mathcal{G}$ with $n$ vertices. This graph clustering can be characterized by a binary characteristic matrix $\Pi \in \mathbb{R}^{n \times r}$, whose rows and columns are corresponding to the vertices and clusters, respectively:
	\begin{equation*}  
	\Pi_{ij} :=  \begin{cases}
	1 & \text{if vertex}~i \in \mathcal{C}_j,\\
	0 & \text{otherwise}. \\ 
	\end{cases}
	\end{equation*}

\begin{rem}
	Note that all the clusters are nonoverlapping, i.e., each vertex can be not assigned to distinct clusters. Therefore, each row of the characteristic matrix $\Pi$ only has one nonzero element. Specifically, we have 
	\begin{equation} \label{eq:Piproperty}
	\Pi \mathds{1}_r = \mathds{1}_n~\text{and}~\mathds{1}^\top_n \Pi=\left[|\mathcal{C}_1|, |\mathcal{C}_2|,\cdots,|\mathcal{C}_r| \right].
	\end{equation}
\end{rem}


\subsection{Problem Setup}

In this paper, we consider a network system evolving over a directed graph $\mathcal{G}$, which is simple, weighted and strongly connected. The dynamics of each vertex is governed by 
\begin{equation} \label{eq:diffcoupling}
\dot{x}_i(t) =  -\sum_{j=1}^{n} \mathcal{A}_{ij}  \left[x_i(t) - x_j(t)\right]  + \sum_{j=1}^{p} f_{ik} u_k(t),
\end{equation}
where $x_i(t) \in \mathbb{R}$ is the state of vertex $i$, and $\mathcal{A}_{ij}$ is the $(i,j)$-th entry of the adjacency matrix of $\mathcal{G}$, representing the strength of the coupling between vertices $i$ and $j$. $u_k(t) \in \mathbb{R}$ is the external input, and $f_{ij}\in \mathbb{R}$ is the gain of the $j$-th input acting on vertex $i$, which is zero if and only if $u_j$ has no effect on vertex $i$.
Let $F\in \mathbb{R}^{n \times p}$ be the matrix such that $F_{ij} = f_{ij}$. We then present the dynamics of the overall network in a compact form as
\begin{equation} \label{sysh}
\bm{\Sigma}:  
\begin{cases}
\dot{x}(t) = - {L} x(t) + {F}u(t),
\\
y(t) = Hx(t),
\end{cases}
\end{equation}
with $x(t) := \left[x_1, x_2, \cdots, x_n \right]^\top \in \mathbb{R}^{n}$ and $u := \left[{u}_1, u_2, \cdots, u_p \right]^\top\in \mathbb{R}^{p}$. The vector $y\in \mathbb{R}^{q}$ collects the outputs of the network, and $H$ is the output matrix. 


This paper aims for structure-preserving model reduction of diffusively coupled networks in form of \eqref{sysh}, and the reduced-order model not only approximates the input-output mapping of the original network system with a certain accuracy but also inherits an interconnection structure with diffusive  couplings. To this end, we adopt graph clustering to build up a reduced-order network model. 
Specifically, the problem addressed in this paper is as follows.
\begin{prob}\label{pro1}
 Given a network system $\bm{\Sigma}$ as in \eqref{sysh} and a graph clustering $\{ \mathcal{C}_1,\mathcal{C}_2,\cdots,\mathcal{C}_r \}$, find a reduced-order model
\begin{equation} \label{sysr}
\bm{\hat{\Sigma}}: 
\begin{cases} 
\dot{\hat{x}}  =  - \hat{ L} x + \hat{ F}u 
\\
\hat{y} =\hat{H} \hat{x}
\end{cases}
\end{equation}
with $\hat{x} \in \mathbb{R}^{r}$, $r \ll n$, 
such that $\hat{L}$ is the Laplacian matrix of a reduced directed graph, and the reduction error $\lVert \bm{\Sigma}-\bm{\hat{\Sigma}} \rVert_{\mathcal{H}_2}$ is minimized. $\hat{L} \in \mathbb{R}^{r \times r}$, $\hat{F} \in \mathbb{R}^{r \times p}$, $\hat{H} \in \mathbb{R}^{q \times r}$ are matrices depending on the graph clustering.   
\end{prob}

It is worth emphasizing that Problem~\ref{pro1} does not aim to find an appropriate graph clustering of the network $\mathcal{G}$. Instead, we focus on how to establish a ``good'' reduced-order model with given clusters. Thus, it is an essentially different problem from e.g., \cite{XiaodongTAC2018MAS,Xiaodong2020Digraph,Ishizaki2014,ishizaki2015clustereddirected}, and we do not apply the Petrov-Galerkin projection framework.

\section{Main Result}
\label{sec:Main}
In this section, a novel model reduction approach for network systems is presented with two steps. In the fist step, a parameterized model of a reduced network is constructed on the basis of graph clustering. Then, the second step computes a set of parameters in an optimal fashion such that the $\mathcal{H}_2$-norm of approximation error is minimized. 

\subsection{Parameterized Reduced-Order Network Model} 
\label{sec:Modeling}

Given a graph clustering of the original network, we present a parameterized model for the reduced network, whose interconnection topology is determined by the clustering. An important property of this parameterized model is that it guarantees the boundedness of the reduction error
$\lVert \bm{\Sigma}-\bm{\hat{\Sigma}} \rVert_{\mathcal{H}_2}$ for all positive edge weights.

To derive a parameterized reduced-order network model with such a property, we first convert the system \eqref{sysh} to its \textit{balanced graph representation} as follows. 
\begin{lem}
	If the underlying graph of $\bm{\Sigma}$ in \eqref{sysh} is strongly connected, then there exists a diagonal $M \in \mathbb{R}^{n \times n}$ with positive diagonal entries such that $\bm{\Sigma}$ is equivalent to
	\begin{equation} \label{sysbal}
	\left \{
	\begin{array}{l}
	M \dot{x}(t) = - {L_b} x(t) + F_bu(t),\\
	y(t) = H x(t),
	\end{array}
	\right.
	\end{equation} 
	where $F_b = M {F}$ and $L_b = M {L}$ is the Laplacian of a balanced graph.  
\end{lem}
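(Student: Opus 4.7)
The plan is to exploit the spectral structure that strong connectivity forces on the Laplacian $L$ and to exhibit $M$ explicitly as the diagonal matrix built from a positive left null-vector of $L$.

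First I would invoke the fact that for a strongly connected directed graph the Laplacian $L$ has a simple zero eigenvalue, with $\ker(L)=\mathrm{span}(\mathds{1})$ (this is the standard consensus/irreducibility result cited from \cite{mesbahi2010graph}). Applying the same result to $L^\top$, there exists a nonzero vector $\xi \in \mathbb{R}^n$ with $L^\top \xi = 0$, i.e. $\xi^\top L = 0$. The key observation is that $\xi$ can be chosen componentwise positive: since $-L$ is a Metzler matrix (non-negative off-diagonals) whose irreducibility is inherited from strong connectivity, the Perron--Frobenius theorem for Metzler matrices gives a positive eigenvector of $-L^\top$ corresponding to its dominant eigenvalue $0$. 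This is the one step I expect to justify most carefully, since everything afterwards is algebra — the positivity of $\xi$ is what makes the construction work.

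Next I would set $M := \diag(\xi) \in \mathbb{R}^{n\times n}$, which is diagonal with strictly positive diagonal entries, and define $L_b := ML$ and $F_b := MF$. I would then verify that $L_b$ is indeed the Laplacian of a balanced graph by checking the conditions in Lemma~\ref{lem:balanced} and in the definition \eqref{defn:Laplacian}:
\begin{itemize}
\item \emph{Row sums vanish:} $L_b \mathds{1} = M L \mathds{1} = 0$, because $L\mathds{1}=0$.
\item \emph{Column sums vanish:} $\mathds{1}^\top L_b = \mathds{1}^\top M L = \xi^\top L = 0$ by the choice of $\xi$.
\item \emph{Sign pattern:} off-diagonal entries satisfy $(L_b)_{ij} = \xi_i L_{ij} \le 0$ for $i \ne j$ (since $\xi_i>0$ and $L_{ij}\le 0$), and diagonal entries are $\xi_i L_{ii} \ge 0$.
\end{itemize}
These three together show that $L_b$ is the Laplacian of a weighted directed graph whose in-degree and out-degree sums are equal at every vertex, hence it is balanced in the sense of Lemma~\ref{lem:balanced}. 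Strong connectivity of this new graph carries over because the nonzero sparsity pattern of $L_b$ agrees with that of $L$.

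Finally, to establish equivalence of the two representations, I would left-multiply the state equation of $\bm{\Sigma}$ in \eqref{sysh} by $M$, obtaining
\begin{equation*}
M\dot x(t) \;=\; -ML\, x(t) + MF\, u(t) \;=\; -L_b\, x(t) + F_b\, u(t),
\end{equation*}
with output equation $y = Hx$ unchanged. Because $M$ is a positive diagonal (hence invertible) matrix, the two descriptions \eqref{sysh} and \eqref{sysbal} generate identical state trajectories for every admissible initial condition and input, and therefore identical input-output maps. This establishes the claimed equivalence and completes the proof.
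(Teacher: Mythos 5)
Your proof is correct and complete. Note that the paper itself gives no argument here --- it defers entirely to the citation \cite{Xiaodong2020Digraph} --- so your write-up actually supplies the missing reasoning rather than diverging from it. The construction you use (take a strictly positive left null-vector $\xi$ of $L$, guaranteed by Perron--Frobenius applied to the irreducible Metzler matrix $-L$, and set $M=\diag(\xi)$) is the standard one, and you correctly identify the positivity of $\xi$ as the only step requiring care; the subsequent verification of the row-sum, column-sum, and sign conditions, together with the invertibility of $M$ giving equivalence of trajectories, is exactly right.
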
 

The proof follows directly from \cite{Xiaodong2020Digraph}. Next, we establish a reduced-order model using the representation \eqref{sysbal} to guarantee a bounded reduction error
$\lVert \bm{\Sigma}-\bm{\hat{\Sigma}} \rVert_{\mathcal{H}_2}$. 

Let $\mathcal G_b$ be the balanced graph of $\mathcal{G}$. Note that $\mathcal{G}$ and $\mathcal G_b$ have the same incidence matrix $\mathcal{B}$. Given a graph clustering $\{\mathcal{C}_1, \mathcal{C}_2, \cdots, \mathcal{C}_r\}$, the \textit{quotient graph} $\hat{\mathcal{G}}_b$ is $r$-vertex directed graph obtained by aggregating all the vertices in each cluster as a single vertex, while retaining connections between clusters and ignoring the edges within clusters. Specifically, if there is an edge $(i,j)$ in $\mathcal G_b$ with vertices $i,j$ in the same cluster, then it will not be presented as an edge in $\hat{\mathcal{G}}_b$. If there exists an edge $(i,j)$ with $i \in \mathcal{C}_k$ and $j \in \mathcal{C}_l$, then there is an edge $(k,l)$ in $\hat{\mathcal{G}}_b$. 

Let $\hat{\mathcal{B}}$ be the incidence matrix of the quotient graph  $\hat{\mathcal{G}}_b$. Algebraically, it can be verified that $\hat{\mathcal{B}}$ is obtained by removing all the zero columns of $\Pi^\top \mathcal{B}$, where $\mathcal{B}$ is the incidence matrix of $\mathcal G_b$ (or $\mathcal G$).
Furthermore, we denote 
\begin{equation}\label{omeg}
	\hat{W} = \diag(\hat{w}), \ \text{with} \ \hat{w} = \begin{bmatrix}
	\hat{w}_1 & \hat{w}_2 & \cdots & \hat{w}_m 
	\end{bmatrix}^\top, 
\end{equation} 
as the edge weight matrix of $\mathcal G_b$, where $\hat{w}_k \in \mathbb{R}_+$,
and $m$ is number of edges in $\hat{\mathcal{G}}_b$. In order to maintain $\hat{\mathcal{G}}_b$ as a balanced graph, we impose the constraint on its edge weights as 
\begin{equation} \label{eq:Bw0}
	\hat{\mathcal{B}} \hat{w} = 0,
\end{equation}
according to Lemma~\ref{lem:property}. Thereby, the dynamics on the balanced quotient graph $\hat{\mathcal{G}}_b$ is then obtained as 
\begin{equation} \label{sysbalr}
	\left \{
	\begin{array}{l}
	\hat{M} \dot{x}(t) = - \hat{L}_b(\hat{W}) x(t) + \hat F_b u(t),\\
	y(t) = \hat H x(t),
	\end{array}
	\right.
\end{equation} 
with the reduced matrices
\begin{align} \label{sysrMs}
	\hat{M} &= \Pi^\top M \Pi, \
	\hat{L}_b(\hat{W}) = \hat{\mathcal{B}}_0 \hat{W} \hat{\mathcal{B}}^\top, 
	\nonumber \\
	\hat{F}_b &= \Pi^\top F_b ,
	\ \text{and} \
	\hat{H} =  H \Pi, 
\end{align}
where $\hat{\mathcal{B}}_0$ is the binary matrix obtained by replacing all the
``$-1$" entries with zeros in $\hat{\mathcal{B}}$, and $\hat{M}_b \in \mathbb{R}^{r \times r}$, $\hat F_b \in \mathbb{R}^{r \times p}$, and $\hat{H} \in \mathbb{R}^{q \times r}$ 
are reduced matrices determined by the given clustering of $\mathcal G_b$. Since the graph clustering is given, i.e., $\Pi$ is known, the only parameters to be decided are the weights in $\hat{W}$, which satisfy the constraint \eqref{eq:Bw0}.  

From the reduced graph balanced representation \eqref{sysbalr}, we immediately construct a parameterized reduced-order model in the form of \eqref{sysr} with the reduced matrices
\begin{align} \label{sysrMs1}
\hat{L}(\hat{W}) = \hat{M}^{-1} \hat{\mathcal{B}}_0 \hat{W} \hat{\mathcal{B}}^\top, \ 
\hat{F} = \hat{M}^{-1} \Pi^\top F_b, \
\hat{H}  = H \Pi,
\end{align}
where $\hat{L}$ represents a reduced weighted graph $\hat{\mathcal{G}}$.
In \eqref{sysrMs1}, only the weight matrix $\hat{W}$ is to be determined, 
which is selected from
the following set
\begin{equation} \label{eq:Wconstraint}
\mathcal{M}: = \{ W = \diag(\hat{w})  \mid \hat{w} \in \mathbb{R}_+^m, \  \hat{\mathcal{B}} \hat{w} = 0 \}.
\end{equation}

In the following example, we demonstrate the parameterized modeling of a simplified dynamic network.  
\begin{exm}	
Consider an network example in vehicle formation \cite{fax2001graph}, where the formation topology $\mathcal{G}$ is depicted in Fig.~\ref{fig:Example6Nodes}. Clearly, $\mathcal{G}$ is balanced, i.e., $\mathcal{G} = \mathcal{G}_b$, with the incidence matrix 
\begin{equation*}
\mathcal{B} = \left[\scriptsize	{\begin{array}{cccccccccc}
1 & -1& 0 & 0 & 0 & 0 & 0 & 0 & -1& 0\\
-1& 1 & 1 &-1 & 0 & 0 & 0 & 0 & 0 & 0\\
0 & 0 & 0 & 1 & 1 & 1 & 0 &-1 & 0 & -1\\
0 & 0 & 0 & 0 &-1 & 0 & 1 & 0 & 0 & 0\\
0 & 0 & 0 & 0 & 0 &-1 &-1 & 1 & 0 & 0\\
0 & 0 & -1& 0 & 0 & 0 & 0 & 0 & 1 & 1\\
\end{array}} \right].
\end{equation*}
Suppose that each vehicle is modeled as a first-order integrator which has the identical mass, i.e., $M = I_6$. 
An external control $u$ is applied on vertex 4, and the vertex 1 is measured as the output signal $y$. Then, the network model is obtained in the form of \eqref{sysbal} with  
	\begin{align*}
	 & L_b= \left[\scriptsize {\begin{array}{cccccc}
		2 & -2 & 0 & 0 & 0 & 0\\
		-1 & 3 & 0 & 0 & 0 & -2\\
		0 & -1 & 4 & -2 & -1 & 0\\
		0 & 0 & 0 & 2 & -2 & 0\\
		0 & 0 & -3 & 0 & 3 & 0\\
		-1 & 0 & -1 & 0 & 0 & 2
	\end{array}} \right], 
	F_b  = 
	\left[\scriptsize {\begin{array}{c}
		0 \\ 0 \\ 0 \\ 1 \\ 0 \\ 0
	\end{array}} \right], 
	\\
	& \text{and} \ H = 
	\left[\scriptsize {\begin{array}{cccccc}
		1 & 0 & 0 & 0 & 0 & 0
	\end{array}} \right].	 
	\end{align*} 

	Consider a clustering of $\mathcal{G}_b$ as
	$\mathcal{C}_1 = \{1,2\},~ \mathcal{C}_2 = \{3,4,5\},~ \mathcal{C}_3 = \{6\}$, which leads to 
	the characterization matrix as 
	\begin{equation*} 
	\Pi = \begin{bmatrix}
	1 & 1 & 0 & 0 & 0 & 0\\
	0 & 0 & 1 & 1 & 1 & 0\\
	0 & 0 & 0 & 0 & 0 & 1\\
	\end{bmatrix}^\top
	\end{equation*}
	The topology of 
	the quotient graph $\hat{\mathcal{G}}_b$ is shown in Fig.~\ref{fig:Example4Nodes} with the incidence matrix   
	\begin{equation*}
	\hat{\mathcal{B}} = 
	\begin{bmatrix}
		1 & -1  & -1 &0 \\
		0 & 1  & 0 &-1\\
		-1 & 0& 1 &1\\
	\end{bmatrix}.
	\end{equation*}
	All the edge weights of $\hat{\mathcal{G}}_b$ are positive parameters to be determined, as labeled in Fig.~\ref{fig:Example4Nodes}, which leads to the parameterized Laplacian matrix as
	\begin{equation*}
	\hat{L}_b (\hat{W}) = \begin{bmatrix}
	\hat{w}_1  &  0  & - \hat{w}_1 \\
	-\hat{w}_2 & \hat{w}_2  & 0 \\
	- \hat{w}_3 & -\hat{w}_4 & \hat{w}_3 + \hat{w}_4
	\end{bmatrix}.
	\end{equation*}
	The weights satisfy the constraint $\hat{\mathcal{B}} \hat{w} = 0$,
	namely, $\hat{w}_3 = \hat{w}_1 - \hat{w}_2$, $\hat{w}_4 = \hat{w}_2$,
	such that $\hat{\mathcal{G}}_b$ is balanced.
	The other matrices in the reduced-order model \eqref{sysbalr} are computed as 
	\begin{align*}
		&\hat{M} = \Pi^\top M \Pi = \begin{bmatrix}
		2 & 0 & 0 \\0 & 3 & 0\\0 & 0 & 1
		\end{bmatrix},
		\hat{F}_b = \Pi^\top F_b =
		\begin{bmatrix}
		0 \\ 1 \\ 0 
		\end{bmatrix}, \\ 
		&\text{and} \
		\hat{H}  = H \Pi =
		\begin{bmatrix}
		1 & 0 & 0 
		\end{bmatrix}.
	\end{align*}
	Then, in the parameterized reduced-order model \ref{sysr}, we have 
	\begin{equation*}
		\hat{L}(\hat{W}) = \hat{M}^{-1} \hat{L}_b (\hat{W}) = \begin{bmatrix}
		\frac{1}{2}\hat{w}_1  &  0  & -\frac{1}{2} \hat{w}_1 \\
		-\frac{1}{3}\hat{w}_2 & \frac{1}{3}\hat{w}_2  & 0 \\
		- \hat{w}_3 & -\hat{w}_4 & \hat{w}_3 + \hat{w}_4
		\end{bmatrix},
	\end{equation*}
	with $\hat{w}_3 = \hat{w}_1 - \hat{w}_2$, and $\hat{w}_4 = \hat{w}_2$. The corresponding reduced graph is depicted in Fig.~\ref{fig:Example4Nodes1}, which is no longer balanced.
	
		\begin{figure}[!tp]
		\begin{minipage}[t]{\linewidth}
			\centering
			\includegraphics[width=0.5\textwidth]{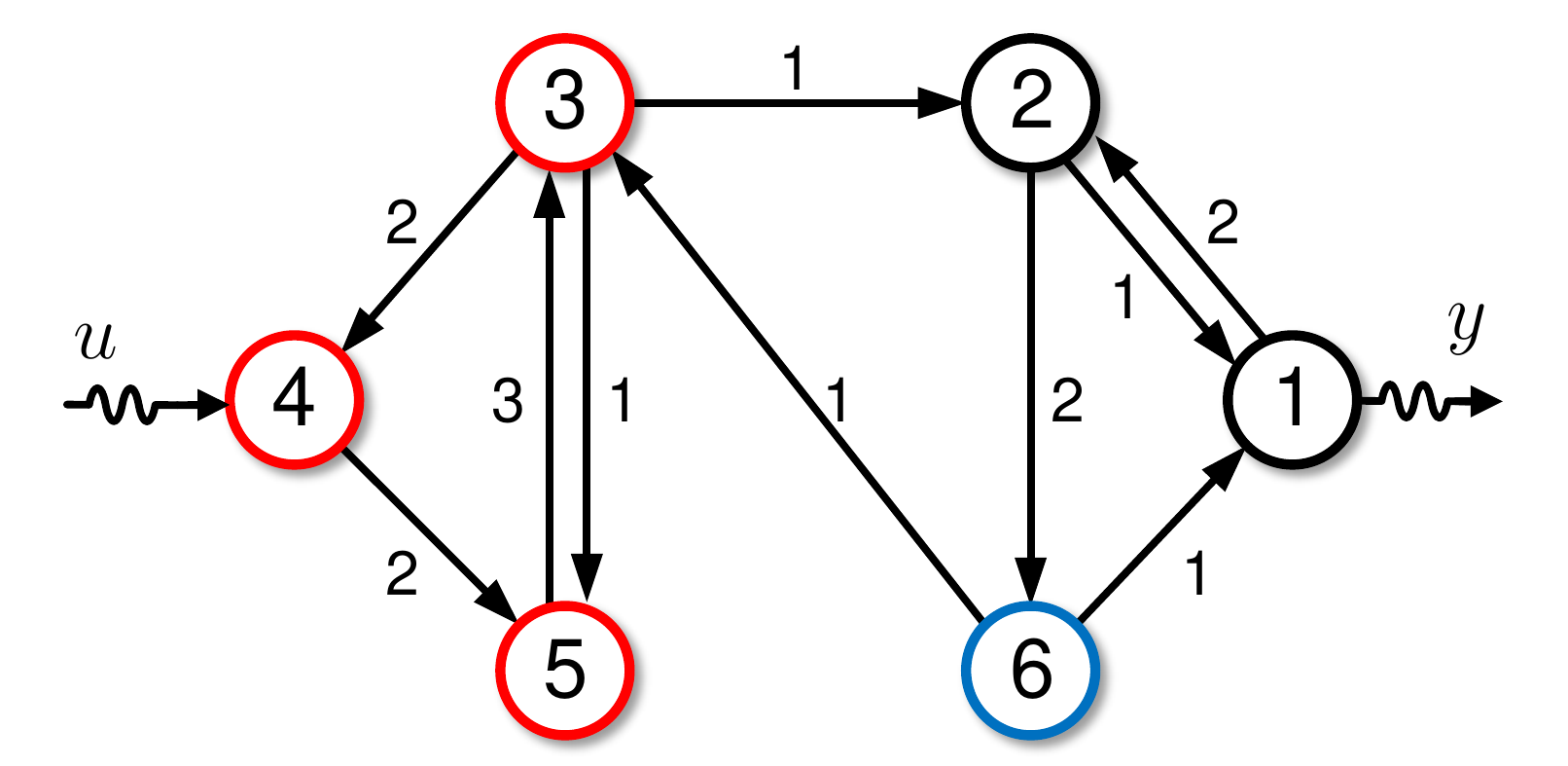}    
			\subcaption{}
			\label{fig:Example6Nodes}
		\end{minipage}%
		\hfill
		\centering
		\begin{minipage}[t]{0.35\linewidth}
			\centering
			\includegraphics[width=\textwidth]{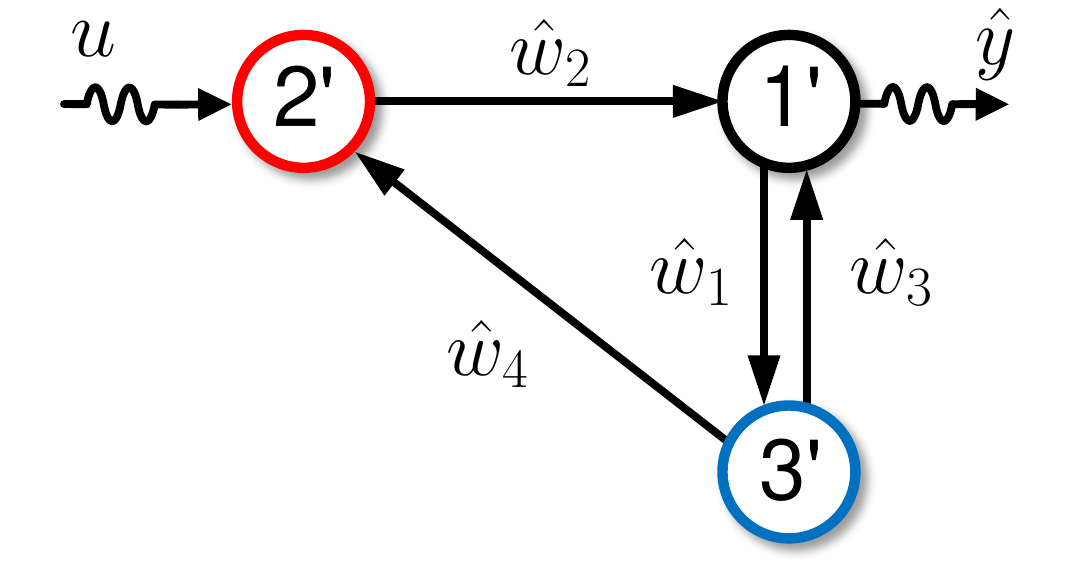}    
			\subcaption{}
			\label{fig:Example4Nodes}
		\end{minipage}%
		\begin{minipage}[t]{0.35\linewidth}
			\centering
			\includegraphics[width=\textwidth]{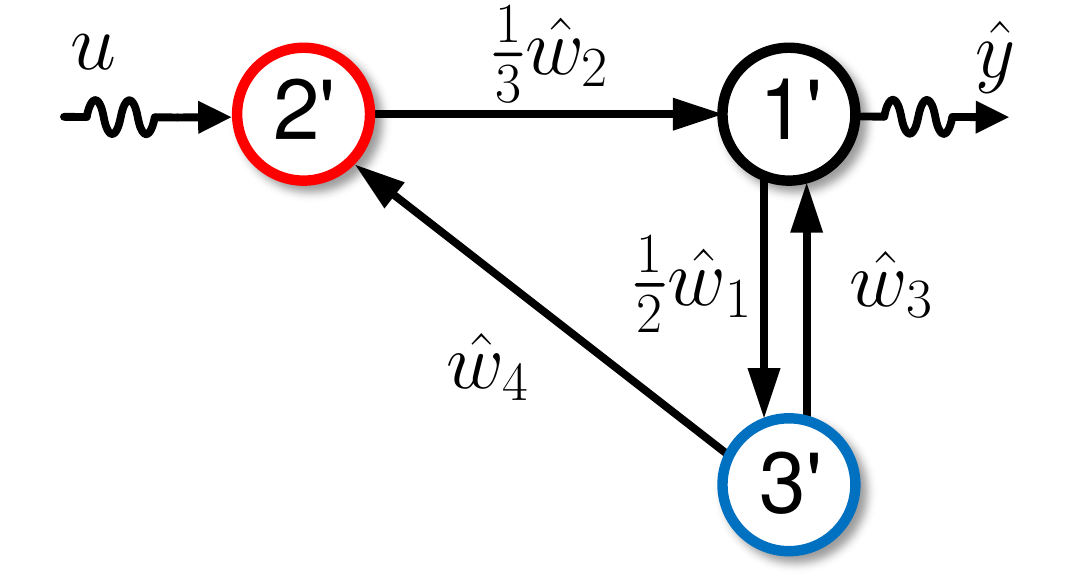}    
			\subcaption{}
			\label{fig:Example4Nodes1}
		\end{minipage}%
		\caption{(a) A directed balanced network $\mathcal{G}_b$ consisting of 6 vertices, in which vertex 3 is controlled and vertex 4 is measured. The vertices in different clusters are indicated by distinct colors. (b) The quotient graph $\hat{\mathcal{G}}_b$ consisting of 3 vertices, where the edge weights are parameters to be determined. The quotient graph is balanced, when the constraints $\hat{w}_3 = \hat{w}_1 - \hat{w}_2$ and $\hat{w}_4 = \hat{w}_2$ are imposed. (c) The resulting reduced graph $\hat{\mathcal{G}}$.}
	\end{figure} 
\end{exm}

\begin{rem}
	The physical interpretation of the reduced matrices in \eqref{sysrMs} are explained. $\hat{M}$ is constructed such that the mass of vertex $k$ in $\hat{\mathcal{G}}_b$ is equal to the mass sum of  all the vertices in $\mathcal{C}_k$ in $\mathcal{G}_b$. The expression of $\hat{F}_b$ means that if a vertex in a cluster $\mathcal{C}_k$ of  $\mathcal{G}_b$ is controlled by an external input, then vertex $k$ in $\hat{\mathcal{G}}_b$ is also controlled. Analogously, $\hat{H}$ indicates that a vertex $k$ in $\hat{\mathcal{G}}_b$ is measured if there is a measurement taken from a vertex in $\mathcal{C}_k$.  
\end{rem}	

With the reduced matrices in \eqref{sysrMs1} and the constraint in \eqref{eq:Bw0}, an important property of the reduced-order network model $\bm{\hat{\Sigma}}$ is that it guarantees the $\mathcal{H}_2$ reduction error between the original system $\bm{\Sigma}$ in \eqref{sysh} and $\bm{\hat{\Sigma}}$ is always bounded.

	The computation of the reduction error amounts to find the $\mathcal{H}_2$ norm of the following {error system}:
\begin{equation} \label{syserr}
G_e(s)=\mathcal{C}_e(sI-\mathcal{A}_e)^{-1}\mathcal{B}_e,
\end{equation} 
where
\begin{align*} 
\mathcal{A}_e  = -\begin{bmatrix} L & 0 \\ 0 & \hat{L} \end{bmatrix}, \
\mathcal{B}_e  =\begin{bmatrix} F \\ \hat F \end{bmatrix},\ \mathcal{C}_e=\begin{bmatrix} H &  -\hat{H} \end{bmatrix}.
\end{align*}
Note that $\mathcal{A}_e$ is not Hurwitz, since ${L}$ and $\hat{L}$ are both Laplacian matrices containing zero eigenvalues. Thus, $\| G_e(s) \|_{\mathcal{H}_2}$ cannot be calculated directly using the state space representation \eqref{syserr}. 
Here, we employ the following matrices
\begin{equation} \label{eq:S}
S_n =  \begin{bmatrix}
-I_{n-1}  \\
\mathds{1}_{n-1}^\top  
\end{bmatrix} 
\in \mathbb{R}^{n \times (n-1)},
S_r =  \begin{bmatrix}
-I_{r-1}  \\
\mathds{1}_{r-1}^\top   
\end{bmatrix} \in \mathbb{R}^{r \times (r-1)}
\end{equation}
which are independent of system dynamics and satisfy 
\begin{equation*}
	S_n^\top \mathds{1}_n = 0, \ \text{and} \ S_r^\top \mathds{1}_r = 0.
\end{equation*}
Let their left pseudo inverses be
\begin{align*}
S_n^+ : &= (S_n^\top M^{-1} S_n)^{-1} S_n^\top M^{-1} \in \mathbb{R}^{(n-1) \times n}, 
\\ 
S_r^+ : &= (S_r^\top \hat{M}^{-1} S_r)^{-1} S_r^\top \hat{M}^{-1}  \in \mathbb{R}^{(r-1) \times r} .
\end{align*}
Then, using the matrices in \eqref{eq:S}, we show the following result.

\begin{lem}\label{lem:property}
	Consider the network system $\bm \Sigma$ in \eqref{sysh} and the reduced-order network model $\bm{\hat{\Sigma}}$ in \eqref{sysr} with matrices in \eqref{sysrMs1}. Then, 
	 $\bm{\Sigma} - \hat{\bm{\Sigma}} \in \mathcal{H}_2$ holds for all $\hat{W} \in \mathcal{M}$.
\end{lem}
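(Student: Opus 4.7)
The plan is to show that $G_e(s)$ has no poles on the imaginary axis (and in particular no pole at the origin), so that together with strict properness, $G_e \in \mathcal{H}_2$. The only potential issue is the origin, since $L$ and $\hat L$ are Laplacians, each with a simple zero eigenvalue provided that both $\mathcal G$ and the quotient graph $\hat{\mathcal G}$ are strongly connected. Strong connectivity of $\mathcal G$ is given; for $\hat{\mathcal G}$ it follows from the fact that contracting each cluster to a single vertex of a strongly connected graph yields another strongly connected graph, and all edge weights of $\hat W \in \mathcal{M}$ are positive. All remaining eigenvalues of $-L$ and $-\hat L$ lie strictly in the open left half-plane, so the only danger is a pole of $G_e$ at $s=0$.

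The main step is a residue calculation. I would write $G_e(s) = H(sI+L)^{-1}F - \hat H(sI+\hat L)^{-1}\hat F$ and expand each resolvent around $s=0$. For $L$ the right and left zero-eigenvectors are $\mathds{1}_n$ and $M\mathds{1}_n$ (the latter because $\mathds{1}_n^\top L_b = \mathds{1}_n^\top M L = 0$ by the balanced representation), so the residue of $(sI+L)^{-1}$ at $s=0$ equals $\mathds{1}_n(M\mathds{1}_n)^\top/(\mathds{1}_n^\top M \mathds{1}_n)$. By the same argument applied to $\hat L_b = \hat{\mathcal B}_0 \hat W \hat{\mathcal B}^\top$ (which is balanced because $\hat{\mathcal B}\hat w = 0$ enforces the condition of Lemma~\ref{lem:balanced}), the residue of $(sI+\hat L)^{-1}$ at $s=0$ equals $\mathds{1}_r(\hat M\mathds{1}_r)^\top/(\mathds{1}_r^\top \hat M \mathds{1}_r)$. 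It then suffices to verify that these two residues contribute equally to $G_e$, so that the $1/s$ terms cancel.

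To close the cancellation I would use the structural identities from \eqref{sysrMs1} and \eqref{eq:Piproperty}: first $\Pi\mathds{1}_r = \mathds{1}_n$, which immediately gives $\hat H \mathds{1}_r = H\Pi\mathds{1}_r = H\mathds{1}_n$ and $\mathds{1}_r^\top \hat M \mathds{1}_r = \mathds{1}_n^\top M \Pi\mathds{1}_r = \mathds{1}_n^\top M \mathds{1}_n$; second, using $\hat F = \hat M^{-1}\Pi^\top M F$, one gets $\mathds{1}_r^\top \hat M \hat F = \mathds{1}_r^\top \Pi^\top M F = \mathds{1}_n^\top M F$. Substituting these identities shows that both residues contribute precisely $H\mathds{1}_n\mathds{1}_n^\top M F/(\mathds{1}_n^\top M \mathds{1}_n)$ to $G_e$, so the pole at $s=0$ cancels and $G_e(s)$ is analytic in a neighbourhood of the origin. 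Combined with the stability of the remaining spectrum and the absence of a feedthrough term, this yields $G_e \in \mathcal{H}_2$ for every $\hat W \in \mathcal{M}$.

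The main obstacle is not in the algebra of the residue cancellation, which is short, but in cleanly justifying the two prerequisites: simplicity of the zero eigenvalue of $\hat L$ (hence that the pole at the origin is simple rather than a double pole), and the correct identification of the left zero-eigenvector of $L$ through the balanced representation \eqref{sysbal}. Alternatively, the $S_n,S_r$ matrices in \eqref{eq:S} could be used to exhibit a reduced Hurwitz realisation of $G_e(s)$ directly, by decomposing $x = S_n \bar x + \alpha\mathds{1}_n$ and similarly for $\hat x$, and then observing that the drift $\alpha(t)\mathds{1}_n$ and $\hat\alpha(t)\mathds{1}_r$ are synchronised through the conservation law $\tfrac{d}{dt}(\mathds{1}_n^\top M x) = \mathds{1}_n^\top M F u = \tfrac{d}{dt}(\mathds{1}_r^\top \hat M \hat x)$ and cancel in $Hx - \hat H \hat x$ because $H\mathds{1}_n = \hat H\mathds{1}_r$; either route leads to the same conclusion.
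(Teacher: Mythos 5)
Your proposal is correct, and its primary route differs from the paper's. The paper proves the lemma in state space: it builds the explicit similarity transformation $\mathcal{U}_e$ in \eqref{eq:Ue} from $\mathds{1}_n,\mathds{1}_r$ and $M^{-1}S_n,\hat M^{-1}S_r$, which block-diagonalizes $\mathcal{A}_e$ into a zero block (the consensus modes) and a Hurwitz block, and then kills the leftover constant term by checking $\bar C_e\bar B_e=0$ via $\mathds{1}_r^\top\hat F_b=\mathds{1}_n^\top F_b$ and $\hat H\mathds{1}_r=H\mathds{1}_n$. You instead work in the frequency domain: you expand each resolvent around $s=0$, identify the left null vectors $M\mathds{1}_n$ and $\hat M\mathds{1}_r$ through the balanced representations, and show the two simple poles at the origin have identical residues $H\mathds{1}_n\mathds{1}_n^\top MF/(\mathds{1}_n^\top M\mathds{1}_n)$, so they cancel. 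The two arguments rest on exactly the same structural identities from \eqref{eq:Piproperty} and \eqref{sysrMs1}; your residue computation is arguably more transparent about \emph{why} the marginal modes cancel (a conservation of the weighted average state), and you correctly supply the two prerequisites the paper leaves implicit, namely simplicity of the zero eigenvalue of $\hat L$ (strong connectivity of the quotient graph plus positivity of $\hat W\in\mathcal{M}$) and strict positivity of the real parts of all remaining eigenvalues. What the paper's construction buys, and what a pure residue argument does not deliver, is the explicit Hurwitz realization $(A_e,B_e,C_e)$ in \eqref{eq:AeBeCe} of the error system, which is reused verbatim in Theorem~\ref{theorem-1} and in the optimization of Section~\ref{sec:weight}; your closing remark about decomposing $x=S_n\bar x+\alpha\mathds{1}_n$ and synchronizing the drifts via $\tfrac{d}{dt}(\mathds{1}_n^\top Mx)=\tfrac{d}{dt}(\mathds{1}_r^\top\hat M\hat x)$ is essentially that construction, so either route you sketch closes the proof.
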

\begin{proof}
	With $S_n$ and $S_r$ in \eqref{eq:S}, we construct a nonsingular $(n+r) \times (n+r)$ matrix as 
	\begin{equation} \label{eq:Ue}
	\mathcal{U}_e = 
	\begin{bmatrix}
	\sigma_M^{-1} \mathds{1}_n & 0 &  M^{-1}S_n & 0\\
	0 & \sigma_M^{-1} \mathds{1}_r   & 0 &  \hat{M}^{-1}S_r\\
	\end{bmatrix}, 
	\end{equation}
	where $\sigma_M := \mathds{1}_n^\top M \mathds{1}_n = \mathds{1}_r^\top \Pi^\top  M  \Pi \mathds{1}_r = \mathds{1}_r^\top \hat{M} \mathds{1}_r$.
	The inverse of $\mathcal{U}_e$ is given as
	\begin{equation*}
	\mathcal{U}_e^{-1} = \begin{bmatrix}
	\mathds{1}_n^\top M & 0\\0 & \mathds{1}_r^\top \hat{M}  \\
	S_n^\dagger M  & 0 \\0 & S_r^\dagger \hat{M}
	\end{bmatrix}.
	\end{equation*}
	Note that $L = M^{-1} L_b$ and $\hat{L} = \hat{M}^{-1} \hat{L}_b$, where both $L_b$ and $\hat{L}_b$ are the Laplacian matrices of balanced graphs, satisfying $\mathds{1}_n^\top L_b = 0$, $L_b \mathds{1}_n = 0$, and $\mathds{1}_r^\top  \hat{L}_b = 0$, $\hat L_b \mathds{1}_r = 0$. 
	Using these properties, we obtain
	\begin{align} \label{eq:Ge0}
	 G_e(s) 
	 \nonumber 
	 &= \mathcal C_e\mathcal{U}_e  (sI- \mathcal{U}_e^{-1} \mathcal A_e \mathcal{U}_e)^{-1} \mathcal{U}_e^{-1} \mathcal B_e \\
	 & = \begin{bmatrix}
	 \bar{C}_e & C_e
	 \end{bmatrix} 
	 \begin{bmatrix}
	 0 & 0 \\
	 0 & (sI - A_e)^{-1}
	 \end{bmatrix}
	 \begin{bmatrix}
	 \bar{B}_e\\ B_e
	 \end{bmatrix}, \nonumber
	 \\
	 & = \bar{C}_e \bar{B}_e + C_e (sI - A_e)^{-1} B_e,
	\end{align}
	where
	\begin{align} \label{eq:AeBeCe}	
		A_e & = - 
		\begin{bmatrix}
		S_n^+ L_b M^{-1} S_n & 0 \\ 0 & S_r^+ \hat{L}_b  \hat{M}^{-1} S_r
		\end{bmatrix}, 
		\nonumber\\
		B_e & =  \begin{bmatrix} S_n^+ F_b \\ S_r^+ \hat{F}_b \end{bmatrix}, \ 
		C_e=\begin{bmatrix} H M^{-1}S_n &  -\hat{H} \hat{M}^{-1}S_r \end{bmatrix},
	\end{align}
	and 
	\begin{equation}
	\bar{B}_e : = \begin{bmatrix}
	\mathds{1}_n^\top F_b \\  \mathds{1}_r^\top \hat F_b 
	\end{bmatrix}, \ \text{and} \
	\bar{C}_e : = \sigma_M^{-1} \begin{bmatrix}
	H \mathds{1}_n &  - \hat{H} \mathds{1}_r 
	\end{bmatrix}.
	\end{equation}
	It follows from \eqref{eq:Piproperty} that $\mathds{1}_r^\top \hat{F}_b = \mathds{1}_n^\top {F}_b$, and $\hat{H} \mathds{1}_r = {H} \mathds{1}_n$, which yield
	$
		\bar{C}_e  \bar{B}_e  = 
		\sigma_M^{-1} (H \mathds{1}_n\mathds{1}_n^\top F_b  - \hat{H} \mathds{1}_r \mathds{1}_r^\top \hat F_b) = 0.
	$
	Thus, \eqref{eq:Ge0} becomes 
	\begin{equation} \label{eq:Ge}
		G_e(s) = C_e (sI - A_e)^{-1} B_e.
	\end{equation} 
	It is not hard to verify that both the matrices 
	$ - S_n^+ L_b M^{-1} S_n$ and $ - S_r^+ \hat{L}_b \hat{M}^{-1} S_r$ are Hurwitz. Consequently, $G_e(s)$ in \eqref{eq:Ge} is  asymptotically stable, i.e., $\bm{\Sigma} - \hat{\bm{\Sigma}} \in \mathcal{H}_2$, for all $\hat{W} \in \mathcal{M}$.
\end{proof}

Next, we discuss the consensus property of the reduced-order network \eqref{sysr} with the matrices in \eqref{sysrMs1}. Consensus is a typical property of diffusively coupled networks, and it implies the nodal states converge to a common value in the absence of the external input. More precisely, the network system in \eqref{sysh} reaches consensus if 
\begin{equation*}
\lim_{t \rightarrow \infty} [x_i(t) - x_j(t)] = 0
\end{equation*}
holds for all $i,j\in \mathcal{V}$ and all initial conditions. 

\begin{pro}
	Consider the network system $\bm{\Sigma}$ in \eqref{sysh} which reaches consensus. Then, the reduced-order model $\bm{\hat{\Sigma}}$ in \eqref{sysr} also reaches consensus, for any  clustering $\Pi$ and $\hat{W} \in \mathcal{M}$.
\end{pro}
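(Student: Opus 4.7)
The plan is to reduce the consensus question for $\bm{\hat{\Sigma}}$ to a spectral statement about $\hat{L}(\hat{W})$: namely, that $\hat{L}(\hat{W})$ has a simple zero eigenvalue with right eigenvector $\mathds{1}_r$, in which case $e^{-\hat{L}(\hat{W})t}$ converges to a rank-one projector onto $\mathrm{span}(\mathds{1}_r)$ and consensus follows. Since $\bm{\Sigma}$ reaches consensus, by standard results for diffusively coupled networks the underlying graph $\mathcal{G}$ is strongly connected, so $L$ has a simple zero eigenvalue with right eigenvector $\mathds{1}_n$.

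First, I would verify the invariance of $\mathds{1}_r$. From \eqref{sysrMs1} we have $\hat{L}(\hat{W}) = \hat{M}^{-1}\hat{\mathcal{B}}_0 \hat{W}\hat{\mathcal{B}}^\top$, and $\hat{\mathcal{B}}^\top \mathds{1}_r = 0$ because the incidence matrix has column sums zero, so $\hat{L}(\hat{W}) \mathds{1}_r = 0$. Moreover, $\hat{W}\in \mathcal{M}$ gives $\hat{\mathcal{B}}\hat{w} = 0$, which by Lemma~\ref{lem:balanced} means $\hat{L}_b(\hat{W}) = \hat{\mathcal{B}}_0 \hat{W}\hat{\mathcal{B}}^\top$ is the Laplacian of the balanced quotient graph $\hat{\mathcal{G}}_b$ and $\mathds{1}_r^\top \hat{L}_b(\hat{W}) = 0$ as well.

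The key step is showing that $\hat{\mathcal{G}}_b$ is strongly connected, so that the single zero eigenvalue of $\hat{L}_b(\hat{W})$ is simple. I would argue directly from the definition of the quotient graph: pick any two clusters $\mathcal{C}_k$ and $\mathcal{C}_l$, choose $i\in\mathcal{C}_k$ and $j\in\mathcal{C}_l$, and use strong connectivity of $\mathcal{G}$ to obtain a directed path $i = v_0 \to v_1 \to \cdots \to v_s = j$. Mapping each $v_t$ to the index of its cluster and deleting consecutive repetitions yields a directed walk in $\hat{\mathcal{G}}_b$ from $k$ to $l$, since every inter-cluster edge of $\mathcal{G}$ (and equivalently of $\mathcal{G}_b$) is present as an edge of $\hat{\mathcal{G}}_b$ by construction. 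Hence $\hat{\mathcal{G}}_b$ is strongly connected, which combined with the positivity of the weights in $\hat{W}$ implies $0$ is a simple eigenvalue of $\hat{L}_b(\hat{W})$.

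Finally, since $\hat{M}$ is a positive diagonal matrix, multiplication by $\hat{M}^{-1}$ is a similarity-like scaling that preserves the rank, the null space, and the simplicity of the zero eigenvalue: $\hat{L}(\hat{W})v = 0 \Leftrightarrow \hat{L}_b(\hat{W})v = 0$, so $\ker \hat{L}(\hat{W}) = \mathrm{span}(\mathds{1}_r)$ and the zero eigenvalue remains simple. The remaining eigenvalues of $\hat{L}(\hat{W}) = \hat{M}^{-1}\hat{L}_b(\hat{W})$ lie in the open right half-plane (this is a standard fact for Laplacians of strongly connected graphs, and the $\hat{M}^{-1}$ rescaling preserves it since it corresponds to a positive-definite congruence on the quotient dynamics). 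Therefore, for every initial condition and with $u\equiv 0$, $\hat{x}(t) = e^{-\hat{L}(\hat{W})t}\hat{x}(0)$ converges to a vector in $\mathrm{span}(\mathds{1}_r)$, yielding $\lim_{t\to\infty}[\hat{x}_i(t) - \hat{x}_j(t)] = 0$ for all $i,j$. The main obstacle is the strong-connectivity argument for $\hat{\mathcal{G}}_b$, but it is handled cleanly by the path-projection construction above.
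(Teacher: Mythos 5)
Your proof is correct and follows essentially the same route as the paper: show that $\hat{L}(\hat{W})$ is the Laplacian of a strongly connected weighted digraph, conclude that its zero eigenvalue is simple with eigenvector $\mathds{1}_r$, and invoke the standard consensus result. The paper simply asserts the strong connectivity of the quotient graph ("it can be verified") and cites \cite{mesbahi2010graph,Xiaodong2020Digraph}, whereas you supply the path-projection verification explicitly; the only cosmetic slip is that you derive strong connectivity of $\mathcal{G}$ from consensus (which in general only yields a spanning out-branching), but this is harmless since the paper's setup assumes $\mathcal{G}$ strongly connected from the outset.
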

\begin{proof}
	It can verified that the parameterized Laplacian matrix $\hat{L}$ defined in \eqref{sysrMs1} characterizes a strongly connected graph. Thus, $\hat{L}$ has only one zero eigenvalue. Then, the proof immediately follows from \cite{mesbahi2010graph,Xiaodong2020Digraph}. 
\end{proof}

The parameterized modeling of the reduced dynamic network using the graph balanced representation in \eqref{sysbal} guarantees the stability of the error system \eqref{syserr}, whose $\mathcal{H}_2$ norm can be evaluated via the transfer function \eqref{eq:Ge} with the Hurwitz matrix $A_e$. Note that in \eqref{eq:Ge}, the matrices $S_n$ and $S_r$ in \eqref{eq:S} is only dependent on the sizes of the networks, and $\Pi$ is known for a given graph clustering, then the weights in $\hat{W}$ become the only unknown parameters to be determined in the follow-up procedure. In the following section, we aim for an optimal selection of the edges weights in the reduced network. 
	
%
%
%
%
 

\subsection{Optimal Edge Weighting}
\label{sec:weight}

In this section, we aim for an optimization scheme for determining  $\hat{W} \in \mathcal{M}$ that minimizes the approximation error $\lVert G_e(s)\rVert_{\mathcal{H}_2}$. Thereby, the following problem is addressed.
\begin{prob}\label{prob2}
	Consider the original network system $\bm\Sigma$ in \eqref{sysh}. Given a graph clustering $\Pi$, find a $\hat{W} \in \mathcal{M}$ such that $\lVert G_e(s)\rVert_{\mathcal{H}_2}$ is minimized, where $\bm{\hat{\Sigma}}$ is the reduced network model defined in \eqref{sysr} with the matrices \eqref{sysrMs1}. 
\end{prob}


To solve this problem, we apply an optimization technique based on the \textit{convex-concave decomposition}, which can be implemented to search for a set of optimal weights iteratively. A fundamental step toward the implementation is to develop a necessary and sufficient condition for characterizing $\lVert G_e(s)\rVert_{\mathcal{H}_2}$, which leads to  suitable constraints for the optimization problem.

\begin{thm}\label{theorem-1}
Given the network system $\bm{\Sigma}$ in \eqref{sysh}. There exists a reduced-order network model $\bm{\hat{\Sigma}}$ in \eqref{sysr} such that $\|G_e(s)\|_{\mathcal{H}_2}^2<\hat{\gamma}$ if and only if there exist matrices  $\hat{Q}=\hat{Q}^\top>0$ with dimension $\hat Q\in\mathbb{R}^{(n+r-2)\times(n+r-2)}$,  $\hat{R}=\hat{R}^\top>0$ with dimension $R\in\mathbb{R}^{q\times q}$, $\hat{W}\in\mathcal{M}$, and $\hat{\delta} \in \mathbb{R}_+$, such that the following inequalities are satisfied,
{\small\begin{eqnarray}
\label{inquali-1}
\begin{bmatrix}
\hat{Q}\bar{A}+\bar{A}^\top\hat{Q}& \hat{Q}B_e& \hat{Q}E\\
B^\top_{e}\hat{Q} & -\hat{\delta}I& 0\\
E^\top\hat{Q}& 0& 0
\end{bmatrix}+\begin{bmatrix}
-\bar{A}^\top_{r}\bar{A}_{r}& 0& \bar{A}^\top_{r}\\
0& 0& 0\\
\bar{A}_{r}& 0 & -I
\end{bmatrix}<0,
\\
	\begin{bmatrix}\label{inquali-2}
	\hat{Q} & \hat{\delta}C^\top_e\\
	\hat{\delta}C_e & \hat{R} \\
	\end{bmatrix}>0, 
	\\
	\tr(\hat{R})<\hat{\gamma}, 
	\label{inquali-3}
	\end{eqnarray}}%
where $B_e$, $C_e$ are defined in \eqref{eq:AeBeCe}, and 
\begin{align}\label{decouple:Ae}
  \bar{A}&=\begin{bmatrix}
  -S_n^+ L_b M^{-1} S_n& 0\\
  0& 0
  \end{bmatrix}, \
  E =\begin{bmatrix}
  0& 0\\
  I& 0
  \end{bmatrix}
  \nonumber\\ 
  \bar{A}_r & =\begin{bmatrix}
   0& -S_r^+ \hat{\mathcal{B}}_0 \hat{W} \hat{\mathcal{B}}^\top\hat{M}^{-1} S_r\\
  0& 0
  \end{bmatrix}.
\end{align}
\end{thm}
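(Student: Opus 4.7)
The plan is to derive \eqref{inquali-1}--\eqref{inquali-3} from the standard bounded-$\mathcal{H}_2$-norm LMI applied to the error system $G_e(s)=C_e(sI-A_e)^{-1}B_e$ of \eqref{eq:Ge}, and then reverse the argument for the converse.

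First, by Lemma~\ref{lem:property} $A_e$ is Hurwitz, so $\|G_e\|_{\mathcal{H}_2}^2<\hat{\gamma}$ holds if and only if there exist $P>0$ and $R>0$ with $A_eP+PA_e^\top+B_eB_e^\top<0$, $C_ePC_e^\top<R$, and $\tr(R)<\hat{\gamma}$. Introducing a scalar $\hat{\delta}>0$ and setting $\hat Q=\hat{\delta}P^{-1}$, $\hat R=\hat{\delta}R$, congruence of the first inequality by $\hat Q$ yields $\hat QA_e+A_e^\top\hat Q+\hat{\delta}^{-1}\hat QB_eB_e^\top\hat Q<0$. A Schur complement on the $B_e$-block turns this into $\begin{bmatrix}\hat QA_e+A_e^\top\hat Q & \hat QB_e\\ B_e^\top\hat Q & -\hat{\delta}I\end{bmatrix}<0$. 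The output bound $C_ePC_e^\top<R$ becomes \eqref{inquali-2} under the same rescaling, and $\tr(R)<\hat{\gamma}$ becomes \eqref{inquali-3}.

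Second, I substitute the affine decomposition $A_e=\bar A+E\bar A_r$ from \eqref{decouple:Ae}, splitting $\hat QA_e+A_e^\top\hat Q$ into the $\hat W$-independent part $\hat Q\bar A+\bar A^\top\hat Q$ and the bilinear cross term $\hat QE\bar A_r+\bar A_r^\top E^\top\hat Q$. Using the completion-of-squares identity $\hat QE\bar A_r+\bar A_r^\top E^\top\hat Q=(\hat QE+\bar A_r^\top)(E^\top\hat Q+\bar A_r)-\hat QEE^\top\hat Q-\bar A_r^\top\bar A_r$, the cross term becomes a positive-semidefinite quadratic form minus two correction terms. The PSD quadratic $(\hat QE+\bar A_r^\top)(E^\top\hat Q+\bar A_r)$ is then lifted through a Schur complement against a new $(3,3)$-block equal to $-I$, producing off-diagonal entries $\hat QE+\bar A_r^\top$ and $E^\top\hat Q+\bar A_r$ in an enlarged $3\times 3$ LMI. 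Adding this lift to the $2\times 2$ block from Step~1 yields exactly \eqref{inquali-1}: the $-\bar A_r^\top\bar A_r$ correction sits in the $(1,1)$ entry, the $\bar A_r^\top$ and $\bar A_r$ terms appear in the $(1,3)$ and $(3,1)$ positions, and the zero $(2,3)$ and $(3,2)$ blocks reflect that $B_e$ and the $E$-lift are decoupled.

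Third, for the converse, the Schur complements are reversed in the opposite order: eliminating the $(3,3)=-I$ block of \eqref{inquali-1} restores the quadratic $(\hat QE+\bar A_r^\top)(E^\top\hat Q+\bar A_r)$ in the $(1,1)$ slot, which combined with $-\bar A_r^\top\bar A_r$ reproduces $\hat QA_e+A_e^\top\hat Q+\hat QEE^\top\hat Q$ via the same identity. Eliminating the $(2,2)=-\hat{\delta}I$ block then yields a Lyapunov inequality stronger than the standard one, whose residual $\hat QEE^\top\hat Q$ is absorbed by the joint scaling freedom in $\hat Q$ and $\hat{\delta}$. Combined with \eqref{inquali-2}--\eqref{inquali-3}, this recovers $\|G_e\|_{\mathcal{H}_2}^2<\hat{\gamma}$. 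The main technical obstacle is the bookkeeping of the convex-concave summand: one must verify that the additive matrix carrying the $\hat W$-quadratic dependence in \eqref{inquali-1} factors as $-VV^\top$ with $V=[\bar A_r^\top\;0\;{-}I]^\top$, so that Schur elimination reproduces $\bar A_r^\top\bar A_r$ exactly and cancels cleanly against the identity used in Step~2; this, together with the scaling freedom in $\hat{\delta}$, is what makes the claim a genuine biconditional rather than a one-sided implication.
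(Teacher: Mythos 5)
Your overall route is the same as the paper's: start from the standard extended $\mathcal{H}_2$ LMI for the stable error system \eqref{eq:Ge}, rescale by $\hat{\delta}$, and separate the bilinear dependence on $\hat{W}$ by writing $A_e=\bar A+E\bar A_r$ and completing the square. The paper implements your completion-of-squares identity as a congruence of an augmented $3\times 3$ inequality by the block-triangular matrix $T$ with $(3,1)$-block $-\bar A_r$, which is the same algebra; your observation that the concave summand factors as $-VV^\top$ with $V=\begin{bmatrix}\bar A_r^\top & 0 & -I\end{bmatrix}^\top$ is also correct and is exactly what Lemma~\ref{lem:psd-convex} later exploits.

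The one step that does not hold as written is the claim in your second paragraph that the Schur lift of $(\hat QE+\bar A_r^\top)(E^\top\hat Q+\bar A_r)$ "yields exactly \eqref{inquali-1}". Taking the Schur complement of the $(3,3)=-I$ block of \eqref{inquali-1} gives the $(1,1)$-entry
$\hat Q\bar A+\bar A^\top\hat Q-\bar A_r^\top\bar A_r+(\hat QE+\bar A_r^\top)(E^\top\hat Q+\bar A_r)=\hat QA_e+A_e^\top\hat Q+\hat QEE^\top\hat Q$,
so \eqref{inquali-1} is your $2\times2$ LMI \emph{plus} the positive semidefinite residual $\hat QEE^\top\hat Q$ in the $(1,1)$ block --- strictly stronger, not identical; your lifted LMI carries an extra $-\hat QEE^\top\hat Q$ in its $(1,1)$ block relative to \eqref{inquali-1}. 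Consequently the residual-absorption argument belongs to the \emph{forward} implication (norm bound $\Rightarrow$ feasibility of \eqref{inquali-1}): given $Q$ satisfying the strict $2\times2$ LMI, one must take $\hat Q=\hat\delta Q$ with $\hat\delta$ sufficiently small (the paper's "sufficiently large $\delta=1/\hat\delta$") so that the term $\hat\delta\, QEE^\top Q$ is dominated by the strict negativity margin. In the converse direction the residual costs nothing: \eqref{inquali-1} implies the perturbed $2\times2$ LMI, and discarding the PSD term $\hat QEE^\top\hat Q$ from its $(1,1)$ block immediately gives the standard one, with no scaling required. You have all the ingredients, but the absorption argument is attached to the implication that does not need it and is missing from the one that does.
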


\begin{proof}
Consider the error system $G_e(s)$ in \eqref{eq:Ge}, which is asymptotically stable. Following e.g., \cite{pipeleers2009extended}, we have $\|G_e(s)\|_{\mathcal{H}_2}^2<\gamma$, with $\gamma \in \mathbb{R}_+$, if and only if there exist matrices $Q=Q^\top>0$ and $R=R^\top>0$ such that  
	\begin{eqnarray}
	\begin{bmatrix} \label{eq:ineq1}
	Q A_e+A^\top_e Q & QB_e \\
	B^\top_e Q & -I_p \\
	\end{bmatrix}
	&<&0, \\
	\begin{bmatrix}\label{eq:ineq2}
	Q & C^\top_e\\
	C_e & R \\
	\end{bmatrix}&>&0, \\
	\tr(R)&<&\gamma, \label{eq:ineq3}
	\end{eqnarray}
	where $A_e$, $B_e$, $C_e$ are defined in \eqref{eq:AeBeCe}.
  
In the following, we prove that the three inequalities are equivalent to \eqref{inquali-1}, \eqref{inquali-2}, and \eqref{inquali-3}, respectively. First, it is not hard to verify that \eqref{eq:ineq1} is equivalent to 
\begin{equation}\label{inequality:proof-1}
 \begin{bmatrix}
    QA_e+A^\top_{e}Q& QB_e& Q E\\
    B^\top_{e}Q& -I& 0\\
    E^\top Q& 0& -\delta I
\end{bmatrix}<0
\end{equation}
for a sufficiently large scalar $\delta \in \mathbb{R}_+$, where $E$ is defined in \eqref{decouple:Ae}.
Consider a nonsingular matrix 
\begin{equation*}
    T=\begin{bmatrix}
    I& 0& 0\\
    0&I &0\\
    -\bar{A}_{r}& 0& I
    \end{bmatrix}.
\end{equation*}
Pre- and post-multiplying by $T^\top$ and $T$, respectively, \eqref{inequality:proof-1} then becomes \eqref{inquali-1}, where the equation $A_e=\bar{A}+E\bar{A}_{r}$, and the substitutions $\hat{\delta}=\frac{1}{\delta}>0$, $\hat{Q}=\frac{1}{\delta}Q>0$ are used. 

Next, we observe that the following implications hold.
\begin{equation*}
        \begin{bmatrix}
          Q& C^{\top}_{e}\\
          C_{e}& R
        \end{bmatrix}>0
 \Leftrightarrow\begin{bmatrix}
 \frac{1}{\hat{\delta}}\hat{Q}& C^{\top}_{e}\\
  C_{e}& R
 \end{bmatrix}>0 
\Leftrightarrow\begin{bmatrix}
 \hat{Q}& \hat{\delta}C^{\top}_{e}\\
 \hat{\delta}C_{e}& \hat{R}
\end{bmatrix}>0
\end{equation*}
\begin{equation*}
    \tr(R)<\gamma\Leftrightarrow
    \frac{1}{\hat{\delta}}\tr(\hat{R})<\gamma
    \Leftrightarrow\tr(\hat{R})<\hat{\gamma},
\end{equation*}
with $\hat{R}=\hat{\delta}R$ and $\hat{\gamma}=\hat{\delta}\gamma$. As a result,  \eqref{inquali-2} and  \eqref{inquali-3} are equivalent to \eqref{eq:ineq2} and \eqref{eq:ineq3}, respectively. 
\end{proof}

Based on Theorem \ref{theorem-1}, we reformulate
Problem \ref{prob2} more explicitly as the following minimization problem 
\begin{align}\label{pro:min-1}
      &\min_{\hat{Q}>0, ~\hat{W}\in\mathcal{M}} \tr(R) 
      \\
  & \quad \quad  \text{s.t.\quad  \eqref{inquali-1} and  \eqref{inquali-2} hold},\nonumber
\end{align}
where $\hat{R}=\hat{\delta}R$ with a given $\hat{\delta} \in \mathbb{R}_+$. Note that the constraint \eqref{inquali-2} can be solved efficiently using standard LMI solvers, while \eqref{inquali-1}, due to the nonlinearity term $\bar{A}^\top_{r}\bar{A}_{r}$, is a bilinear matrix inequality, which causes the major challenge in solving the problem \eqref{pro:min-1}.

To handle the bilinear constraint \eqref{inquali-1}, we
adopt the technique called \textit{psd-convex-concave decomposition} \cite{dinh2011combining}.
\begin{defn} \label{def:psdcc}
A matrix-valued mapping $\Phi: \mathbb{R}^n \rightarrow \mathcal{S}^\ell$ is called positive semidefinite convex concave (psd-convex-concave) if $\Phi$ can be expressed as  
	$\Phi = \Phi_1 - \Phi_2$,
	where $\Phi_k$, with $k = 1, 2$, are positive semidefinite convex (psd-convex), i.e.,
	\begin{equation} \label{eq:psdc}
		\Phi_k (\lambda w_1 + (1 - \lambda) w_1) \leq \lambda \Phi_k (w_1) + (1 - \lambda) \Phi_k (w_2),
	\end{equation}
	holds for all $\lambda \in [0, 1]$ and $w_1, w_2 \in \mathbb{R}^n$.  The pair $(\Phi_1, \Phi_2)$ is called a psd-convex-concave
	decomposition of $\Phi$.
\end{defn}

Consider the bilinear inequality \eqref{inquali-1}, and define the following matrix-valued mapping:
\begin{equation}\label{mapping}
 \Phi(\hat{Q}, \hat{\delta}, \hat{W})=\psi(\hat{Q}, \hat{\delta})+\varphi(\hat{W}), 
\end{equation}
where 
\begin{align}
\psi(\hat{Q}, \hat{\delta})&=\begin{bmatrix}
\hat{Q}\bar{A}+\bar{A}^\top\hat{Q}& \hat{Q}B_e& \hat{Q}E\\
B^\top_{e}\hat{Q} & -\hat{\delta}I& 0\\
E^\top\hat{Q}& 0& 0
\end{bmatrix}, 
\\
\label{varphiW}
\varphi(\hat{W})&=\begin{bmatrix} 
-\bar{A}^\top_{r}\bar{A}_{r}& 0& \bar{A}^\top_{r}\\
0& 0& 0\\
\bar{A}_{r}& 0 & -I
\end{bmatrix}. 
\end{align}
Then, the following lemma shows that the pair $(\psi, -\varphi)$ is a psd-convex-concave decomposition of  $\Phi$. 
\begin{lem}\label{lem:psd-convex}
The matrix-valued mapping $\Phi(\hat{Q}, \hat{\delta}, \hat{W})$ in \eqref{mapping} is psd-convex-concave.
\end{lem}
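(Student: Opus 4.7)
The plan is to exhibit an explicit psd-convex-concave decomposition $\Phi = \Phi_1 - \Phi_2$ with $\Phi_1 := \psi$ and $\Phi_2 := -\varphi$, and then verify that each piece is psd-convex in the sense of Definition~\ref{def:psdcc}. Note that $\psi(\hat{Q},\hat{\delta})$ depends affinely on $(\hat{Q},\hat{\delta})$ (and not at all on $\hat{W}$), so the psd-convexity inequality \eqref{eq:psdc} holds for $\psi$ with equality; hence $\psi$ is both psd-convex and psd-concave, and $\Phi_1 = \psi$ is trivially psd-convex in $(\hat{Q},\hat{\delta},\hat{W})$.

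The main work is showing that $\Phi_2 = -\varphi$ is psd-convex in $\hat{W}$. The key observation is the rank-one-type factorization
\begin{equation*}
-\varphi(\hat{W}) \;=\; \begin{bmatrix} -\bar{A}_r^\top \\ 0 \\ I \end{bmatrix} \begin{bmatrix} -\bar{A}_r & 0 & I \end{bmatrix} \;=\; N(\hat{W})^\top N(\hat{W}),
\end{equation*}
where $N(\hat{W}) := [-\bar{A}_r(\hat{W}),\ 0,\ I]$ is linear in $\hat{W}$ because $\bar{A}_r$ depends linearly on $\hat{W}$ through the definition in \eqref{decouple:Ae}. A direct block multiplication recovers the block pattern in \eqref{varphiW} with the required sign flip.

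Given this factorization, psd-convexity of $-\varphi$ reduces to the standard fact that $X \mapsto X^\top X$ is operator convex. Concretely, for $\hat{W}_1,\hat{W}_2 \in \mathcal{M}$ and $\lambda \in [0,1]$, using the linearity $N(\lambda \hat{W}_1 + (1-\lambda)\hat{W}_2) = \lambda N(\hat{W}_1) + (1-\lambda) N(\hat{W}_2)$, one computes
\begin{equation*}
\lambda N_1^\top N_1 + (1-\lambda) N_2^\top N_2 - \bigl(\lambda N_1 + (1-\lambda) N_2\bigr)^\top \bigl(\lambda N_1 + (1-\lambda) N_2\bigr) = \lambda(1-\lambda)(N_1 - N_2)^\top (N_1 - N_2) \succeq 0,
\end{equation*}
which is exactly the psd-convexity inequality for $-\varphi$. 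Since $-\varphi$ is independent of $(\hat{Q},\hat{\delta})$, this carries over to psd-convexity in the full variable $(\hat{Q},\hat{\delta},\hat{W})$. Combining the two pieces gives the claimed psd-convex-concave decomposition.

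The only mildly subtle step is spotting the factorization $-\varphi = N^\top N$; once this is in place, the remainder is a textbook computation. No extra step is needed for the constraint $\hat{\mathcal{B}}\hat{w}=0$ defining $\mathcal{M}$, since psd-convexity is a pointwise algebraic property that is unaffected by restricting the domain to an affine subspace of edge-weight vectors.
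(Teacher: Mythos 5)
Your proof is correct and follows essentially the same route as the paper's: both arguments rest on writing the quadratic dependence on $\hat{W}$ as a Gram form of a map linear in $\hat{W}$ and then invoking the identity $\lambda N_1^\top N_1+(1-\lambda)N_2^\top N_2-N_\lambda^\top N_\lambda=\lambda(1-\lambda)(N_1-N_2)^\top(N_1-N_2)\succeq 0$, together with the trivial psd-convexity of the affine part $\psi$. The only (cosmetic) difference is that you factor all of $-\varphi$ at once as $N(\hat{W})^\top N(\hat{W})$, while the paper first strips off the affine blocks and applies the same computation to the isolated nonlinear submatrix $\bar{A}_r^\top\bar{A}_r=\mathrm{blkdiag}(0,\varphi_a(\hat{W}))$; your packaging is arguably slightly cleaner since it makes the irrelevance of the affine blocks automatic.
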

\begin{proof}
Note that the  matrix $\psi(\hat{Q}, \hat{\delta})$ in \eqref{mapping} is linear with respect to $\hat{Q}$ and $\hat{\delta}$. Thus, it is immediate that $\psi(\hat{Q}, \hat{\delta})$ is psd-convex. Then, the claim holds if $-\varphi(\hat{W})$ in \eqref{mapping} is psd-convex.

With the structure of $\bar{A}_{r}$ in \eqref{decouple:Ae}, the only nonlinear submatrix in $-\varphi(\hat{W})$ can be expressed as
\begin{equation}\label{phi_1}
\bar{A}^\top_{r}\bar{A}_{r} =\begin{bmatrix}
0& 0\\
0& {\varphi}_{a}(\hat{W})
\end{bmatrix},   
\end{equation}
with 
$ 
{\varphi}_{a}(\hat{W}):=S_{r}^\top\hat{M}^{-1}\hat{\mathcal{B}}\hat{W}\hat{\mathcal{B}}_0^\top(S_r^+)^\top S_r^+\hat{\mathcal{B}}_0\hat{W}\hat{\mathcal{B}}^\top\hat{M}^{-1}S_r.
$ 
Then, showing the psd-convexity of $-\varphi(\hat{W})$ in \eqref{varphiW} is equivalent to prove that ${\varphi}_{a}(\hat{W})$ is psd-convex. 

Let $W_1, W_2 \in \mathcal{M}$, and denote $W_\lambda = \lambda\hat{W}_1+(1-\lambda)\hat{W}_2$. For any $\lambda\in[0, 1]$, we have   
\begin{align}\label{b}
&\varphi_{a}(W_\lambda)-\lambda \varphi_{a}(\hat{W}_1)-(1-\lambda)\varphi_{a}(\hat{W}_2)\nonumber \\
=&S_{r}^\top\hat{M}^{-1}\hat{\mathcal{B}} W_\lambda \hat{\mathcal{E}}^\top(S_r^+)^\top S_r^+\hat{\mathcal{E}}W_\lambda\hat{\mathcal{B}}^\top\hat{M}^{-1}S_r
\nonumber\\
&-t S_{r}^\top\hat{M}^{-1}\hat{\mathcal{B}}\hat{W}_1\hat{\mathcal{E}}^\top(S_r^+)^\top S_r^+\hat{\mathcal{E}}\hat{W}_1\hat{\mathcal{B}}^\top\hat{M}^{-1}S_r
\nonumber\\
&-(1-\lambda) S_{r}^\top\hat{M}^{-1}\hat{\mathcal{B}}\hat{W}_2\hat{\mathcal{E}}^\top(S_r^+)^\top S_r^+\hat{\mathcal{E}}\hat{W}_2\hat{\mathcal{B}}^\top\hat{M}^{-1}S_r
\nonumber\\
=&-\lambda(1-\lambda)(V_{1}(\hat{W}_1-\hat{W}_2)V_{2}(\hat{W}_1-\hat{W}_2)V_{1}^\top)\leq0,
\end{align}
where $V_{1}=S_{r}^\top\hat{M}^{-1}\hat{\mathcal{B}}$, and $V_{2}= \hat{\mathcal{B}}_0^\top(S_r^+)^\top S_r^+\hat{\mathcal{B}}_0$. Since $-t(1-t)\leq0$ and
$
 V_{1}(\hat{W}_1-\hat{W}_2)V_{2}(\hat{W}_1-\hat{W}_2)V_{1}^\top\geq0,  
$
it holds that  
\begin{equation*}
\varphi_{a}(\lambda \hat{W}_1+(1-\lambda)\hat{W}_2)\leq \lambda \varphi_{a}(\hat{W}_1)+(1-\lambda)\varphi_{a}(\hat{W}_2),
\end{equation*}
which implies that the mapping $\varphi_{a}(\hat{W})$ is psd-convex from \eqref{eq:psdc}, i.e., $-\varphi(\hat{W})$ is psd-convex. As a result, it follows from Definition~\ref{def:psdcc} that the matrix-valued mapping $\Phi(\hat{Q}, \hat{\delta}, \hat{W})$ in \eqref{mapping} is psd-convex-concave.
\end{proof}

The psd-convex-concave decomposition in \eqref{mapping} allows us to linearize the optimization problem \eqref{pro:min-1} at a stationary point $\hat{W} \in \mathcal{M}$. To simplify the optimization procedure, we introduce a new optimization variable $\mu$ to eliminate the equality constraint $\hat{\mathcal{B}} \hat{w} = 0$ in \eqref{eq:Wconstraint}, where 
$\hat{W} = \diag(\hat{w})$, and $m$ is the number of edges in the reduced network.

Let $\bar{\mathcal{B}} \in \mathbb{R}^{\bar{r} \times m}$ be a full row rank matrix obtained by removing linearly independent rows of the $\hat{\mathcal{B}} \in \mathbb{R}^{r \times m}$, and it still holds that $\bar{\mathcal{B}} \hat{w} = 0$. Then, there exists a column permutation matrix $P \in \mathbb{R}^{m \times m}$ such that
\begin{equation*}
	\bar{\mathcal{B}} \hat{w} = \begin{bmatrix}
	\bar{\mathcal{B}}_a & \bar{\mathcal{B}}_b
	\end{bmatrix} P \hat{w} = \bar{\mathcal{B}}_a  \mu_a + \bar{\mathcal{B}}_b {\mu} = 0, \ \text{with} \
	P \hat{w} = \begin{bmatrix}
	\mu_a \\ \mu
	\end{bmatrix},
\end{equation*}
where $\bar{\mathcal{B}}_a \in \mathbb{R}^{\bar{r} \times \bar{r}}$ is full rank. $\mu \in \mathbb{R}_+^{\bar{m}}$, and $\bar{m} =  m - \bar{r}$, is defined as the new optimization variable. Note that 
\begin{equation} \label{eq:mu}
	\hat{w} = P^\top \begin{bmatrix}
		-\bar{\mathcal{B}}_a^{-1}\bar{\mathcal{B}}_b \\ I_{\bar{m}}
	\end{bmatrix} \mu, 
\end{equation}
which projects the weights $\hat{w}$ into $\ker(\hat{\mathcal{B}})$. Thereby, we rewrite the constraint $\hat{W} \in \mathcal{M}$ as $\mu \in \mathbb{R}_+^{\bar{m}}$ \eqref{eq:mu}.  Now, we redefine the matrix-valued mapping $\varphi(\hat{W})$ in \eqref{mapping} as 
\begin{equation} \label{eq:phi-chi}
\phi(\mu) = \varphi(\hat{W}),
\end{equation}
which remains psd-convex due to the linear relation in \eqref{eq:mu}.
The derivative of the matrix-valued mapping $\phi(\mu)$ at $\mu$ is a linear
mapping $D \phi:  \mathbb{R}_+^{\bar{m}} \rightarrow \mathcal{S}^{\ell}$, with $\ell = n+2r+p+pq-3$, which is defined as
\begin{equation}\label{eq:deriva}
D \phi(\mu) [h] =\sum_{i=1}^{\bar{m}} h_{i}\frac{\partial\phi}{\partial \mu_{i}}(\mu), \ \forall~h \in \mathbb{R}^{\bar{m}}.
\end{equation}

Given a point $\mu^{(k)} \in \mathbb{R}_+^{\bar{m}}$, the linearized formulation of the problem \eqref{pro:min-1} at $\mu^{(k)}$ is formulated as
\begin{align}\label{pro:lin-min1}
&\min_{\hat{Q}>0, \mu \in \mathbb{R}_+^{\bar{m}} }  f(\mu)=\tr(R)
\\
&\quad \text{s.t.} \quad 	\begin{bmatrix}
\hat{Q} & \hat{\delta}C^\top_e\\
\hat{\delta}C_e & \hat{R} \\
\end{bmatrix}>0,\ \hat{\delta} \in \mathbb{R}_+, \ \hat{R} = \hat{\delta} R >0   \nonumber
	\\
& \hspace*{35pt} \psi(\hat{Q}, \hat{\delta})+\varphi(\hat{W}^{(k)})+D\phi(\mu^{(k)})[\mu-\mu^{(k)}]<0, \nonumber
\end{align}
where the derivative of $\phi(\mu^{(k)})$ is given as
\begin{equation*}
	D\phi(\mu^{(k)})[\mu-\mu^{(k)}]:=\sum_{i=1}^{m}(\mu_{i}-\mu^{(k)}_{i})\frac{\partial\phi}{\partial \mu_{i}^{(k)}}(\mu^{(k)}),
\end{equation*}
 with 
$
 j=1, \cdots, \bar{m}.
$ 
Notice that 
the optimization problem \eqref{pro:lin-min1} is \textit{convex}, of which the global optimum can be solved efficiently using standard SDP solvers e.g., SeDuMi \cite{sturm1999SeDuMi}. Based on Lemma~\ref{lem:psd-convex} and \eqref{pro:lin-min1}, we are now ready to present an algorithmic approach for
solving the minimization problem in \eqref{pro:min-1} in an iterative fashion, see Algorithm~\ref{alg}, in which $\varepsilon\in \mathbb{R}_+$ is a prefixed error tolerance determining whether to terminate the iteration loop.
 
\begin{algorithm}[!tp]
	\caption{Iterative Edge Weighting}
\begin{algorithmic}[1]
\REQUIRE
     $L$, $F$, $H$, $\Pi$, and a small scalar $\hat{\delta}\in \mathbb{R}_+$
\ENSURE
       $\hat{W}^{*}$.

\STATE Compute the incidence matrix $\hat{\mathcal{B}}$ of the quotient graph $\hat{\mathcal{G}}_b$.

\STATE Choose an initial vector $\mu^{(0)} \in \mathbb{R}_+^{\bar{m}}$.

\STATE Set iteration step: $k \leftarrow 0$.
\REPEAT
\STATE Solve \eqref{pro:lin-min1} to obtain the optimal solution $\mu^{*}$.

\STATE   $k \leftarrow k+1$, and $\mu^{(k)} \leftarrow \mu^{*}$.

\UNTIL{$|f(\mu^{(k+1)})-f(\mu^{(k)})| \leq \varepsilon$.
	
\STATE Compute $\hat{w}^*$ using \eqref{eq:mu}, and return 
 $\hat{W}^{*} \leftarrow \diag(\hat{w}^*)$.}
\end{algorithmic}
	\label{alg}
\end{algorithm}

The initial condition ${\mu}^{(0)}$ can be chosen as an arbitrary vector with all strictly positive entries. With \eqref{eq:mu}, it will guarantee $\hat{W}^{0} \in \mathcal{M}$, i.e., the reduced graph is balanced. Furthermore,
we can also initialize $\mu$ using the outcome of graph clustering projection in \cite{XiaodongCDC2017Digraph,Xiaodong2020Digraph}. Specifically, from a given clustering $\Pi$, we construct an initial reduced Laplacian matrix in \eqref{sysrMs} as $\hat{L}_b^{(0)}: = \Pi^\top L_b \Pi$, with $L_b$ the Laplacian matrix of the balanced graph $\mathcal{G}_b$. Then, the initial weight of the edge $(i,j)$ in the quotient graph $\hat{\mathcal{G}}_b$ is the $(i,j)$-th entry of $\hat{L}_b^{(0)}$. 
By doing so, $\mu^{(0)}$ can be formed such that $\hat{W}^{0} \in \mathcal{M}$. 


The convergence analysis of Algorithm~\ref{alg} follows naturally from \cite{dinh2011combining}, and it means that a local optimum can be obtained. More importantly, if we select the initial condition from the clustering-based projection, it is guaranteed that, through iteration, the approximation accuracy of reduced-order network model with the weights obtained by Algorithm~\ref{alg} will be improved. In this sense, the approximations obtained by the proposed method is at least better than the ones obtained by clustering-based projection methods in e.g., \cite{XiaodongCDC2017Digraph,Xiaodong2020Digraph}. We will show this merit from a numerical example in the next section.


\section{Illustrative Example} 
\label{sec:example}

To illustrate the effectiveness of the proposed edge weighting approach, we implement it to a sensor network example from \cite{Scutari2008SensorNetworks,XiaodongCDC2017Digraph}. The topology of the this network is shown in Fig.~\ref{originalnetwork}, which consists of 14 strongly connected vertices, and all the edge weights are 1. In this example, two external input signals are injected into the network via vertices 2 and 7, respectively, and the states of vertices 9 and 10 are measured.

Suppose that 5 clusters are given for this directed network as
$
		\mathcal{C}_1 = \{1, 3, 4, 5\}, \
		\mathcal{C}_2  = \{2\}, \ \mathcal{C}_3 = \{6, 8, 9\}, \
		\mathcal{C}_4 = \{7\}, \ \text{and} \ \mathcal{C}_5=\{10,11,12,13,14\},
$
which leads to the quotient network in Fig.~\ref{reducednetwork}, with incidence matrix
\begin{equation*}
	\hat{\mathcal{B}}: = 
\left[\scriptsize	{\begin{array}{cccccccc}
	1 & 1 & -1 &-1 & 0 &  0 &  0 & -1 \\
	-1& 0 &  1&  0&  0&   0 & 0 &  0\\
	0& -1 & 0 & 1 & 1 &  1 & -1 &  0 \\
	0&  0 & 0 & 0 &-1 &  0 &  1 &  0 \\
	0&  0 & 0 & 0 & 0 &  -1 &  0 & 1 \\
	\end{array}}\right],
\end{equation*}
There are 8 edges in the quotient graph, and each edge is assigned with a symbolic weight as labeled in Fig.~\ref{reducednetwork}. These variables, determining the reduction error, are to be determined by our optimization approach.

First, the parameterized reduced model in \eqref{sysbalr} of the quotient graph is generated with matrices
\begin{align*}
	&	\hat{L}_b  = 
	\left[\scriptsize	{\begin{array}{ccccc}
	\hat{w}_1 + \hat{w}_2 &  -\hat{w}_1 & -\hat{w}_2  & 0 & 0\\
	-\hat{w}_3 &  \hat{w}_3   & 0 & 0 & 0 \\
	- \hat{w}_4 & 0 & \hat{w}_4 + \hat{w}_5 + \hat{w}_6 & -\hat{w}_5 & -\hat{w}_6 \\
	0 & 0& -\hat{w}_7 &  \hat{w}_7 &  0\\
	-\hat{w}_8 & 0 & 0 &  0 & \hat{w}_8 \\
	\end{array}}\right], \\
	&\hat{M} = \left[\scriptsize {\begin{array}{cccccc}
	2.1921&
	0.6803&
	0.3779&
	0.0756&
	0.4157
	\end{array}}\right], \\
	& \hat{F}_b =  
	\left[\scriptsize {\begin{array}{ccccc}
	0 & 0.6803 & 0 & 0 & 0\\
	0 & 0 & 0 & 0.0756 & 0 \\
	\end{array}}\right]^\top, 
 	\hat{H}  = \left[\scriptsize {\begin{array}{ccccc}
	0 & 0 & 1 & 0 & 0\\
	0 & 0 & 0 & 0 & 1 \\
	\end{array}}\right],
\end{align*}
and the weight vector $\hat{w}$ in \eqref{omeg} satisfy the following constraints for a balanced graph:
$
\hat{w}_1 = \hat{w}_3, \ 
\hat{w}_2 = \hat{w}_4 + \hat{w}_8, \ 
\hat{w}_6   = \hat{w}_8, \ 
\hat{w}_5  = \hat{w}_7. 
$

Next, we implement Algorithm~\ref{alg} to solve the optimization problem \eqref{pro:min-1} with $\mu = [\hat{w}_1, \hat{w}_2, \hat{w}_5, \hat{w}_6]^\top \in \mathbb{R}_+^4$ as the optimization variable. Particularly, the SeDuMi solver \cite{sturm1999SeDuMi} is adopted to solve the convex problem \eqref{pro:lin-min1}. We choose the initial edge weights obtained by the clustering-based projection \cite{XiaodongCDC2017Digraph,Xiaodong2020Digraph}, which gives
$\hat{w}^{(0)} = [0.6803, 
0.2268,
0.6803,
0.0756,
0.0756,
0.1512,
0.0756,
0.1512]$ 
and the approximation error $\lVert G_e(s)\rVert_{\mathcal{H}_2} = 0.0322$. With $\hat{\delta} = \varepsilon = 10^{-5}$, Algorithm~\ref{alg} stops after 72 iterations. The convergence trajectory of the resulting $\mathcal{H}_2$ reduction error is shown in Fig.~\ref{err}. The final solution of the edge weights are given as
$
\hat{w}^* = \left[  
    0.6826,
	0.2394,
	0.6826,
	0.0948,
	0.0537,
	0.1446,
	0.0537,
	0.1446
 \right]
$, which provides the approximation error $\lVert G_e(s)\rVert_{\mathcal{H}_2} = 0.0187$. 
Through iteration, the edge weighting method further reduces the error by 41.93\%, compared to the clustering-based projection. Therefore, our method can provide a reduced network systems with a better $\mathcal{H}_2$ approximation error.

\begin{figure}[!tp]\centering
	\includegraphics[scale=.6]{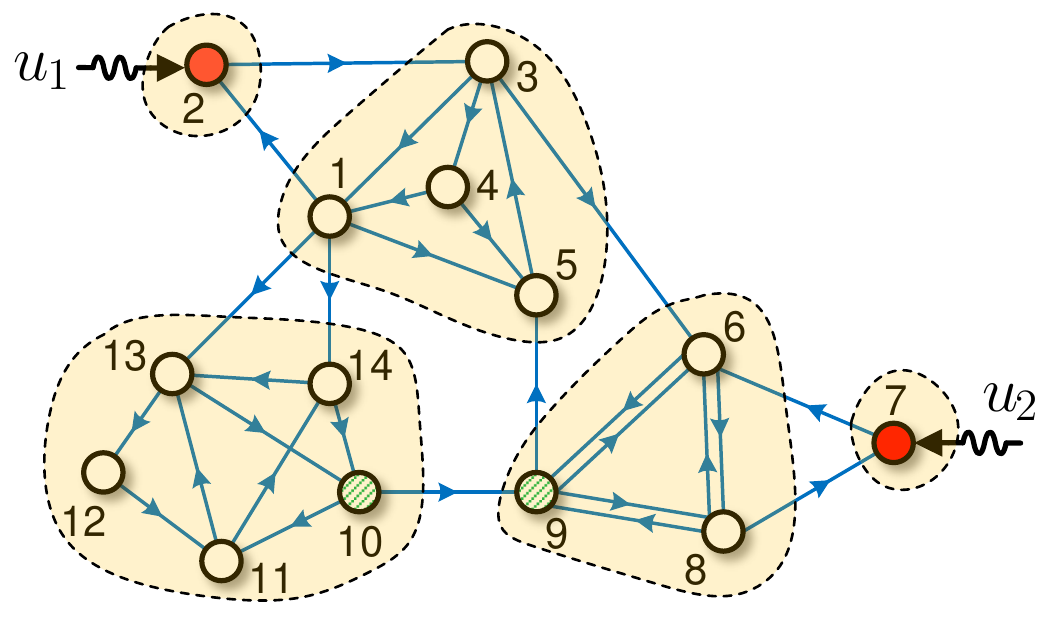}	
	\caption{A connected directed sensor network containing 14 vertices, in which the red vertices are controlled, and the shadowed ones are measured.}
	\label{originalnetwork}
\end{figure}

\begin{figure}[!tp]\centering
	\includegraphics[scale=.7]{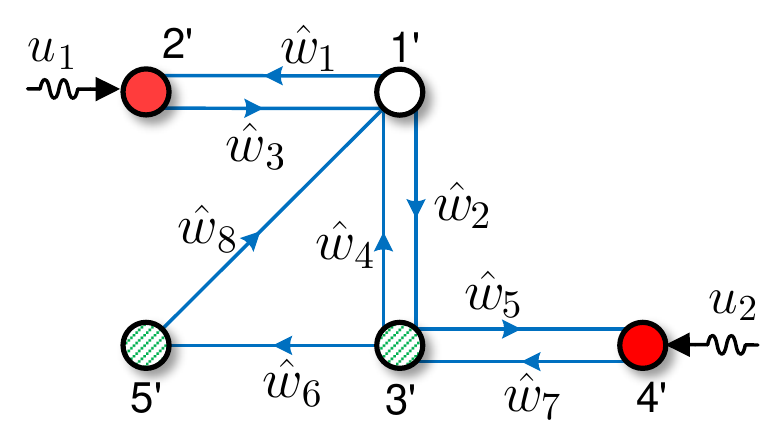}	
	\caption{The quotient graph obtained by clustering, where the controlled vertices are labeled with red color, and measured vertices are indicated by shadow. The weights of the edges are parameters to be determined.}
	\label{reducednetwork}
\end{figure}

\begin{figure}[!tp]\centering
	\includegraphics[scale=.26]{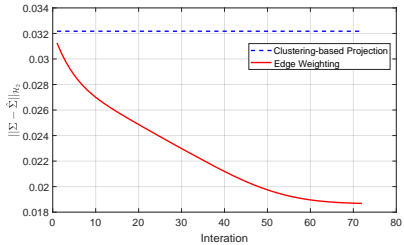}	
	\caption{Approximation errors of clustering-based projection and
		the proposed edge weighting method.}
	\label{err}
\end{figure}

\section{Conclusions} 
\label{sec:conclusion}

In this paper, the $\mathcal{H}_2$ model reduction problem for dynamical networks consisting of diffusively coupled agents has been  formulated as a minimization problem, in which the edge weights in the reduced network are parameters to be chosen. Necessary and sufficient conditions have been proposed for constructing a set of optimal edge weights. An iterative algorithm has been provided to search for the desired edge weights such that the $\mathcal{H}_2$ norm of the approximation error is small. Finally, compared with the projection-based method in \cite{Monshizadeh2014}, the feasibility of this method is illustrated by an example. The advantage of this proposed model reduction method is that not only the structure of the original network has been preserved but also the approximation error has been optimized. 
For future works, we will improve the effectiveness of the iterative algorithm such that the obtained solution is not restricted to a local optimum. Moreover, an extension to networked high-order linear subsystems are also of interest.

%
%



\bibliographystyle{IEEEtran}
\bibliography{weight}

\end{document}